\newtheorem{theorem}{Theorem}[section]
\newtheorem{lemma}[theorem]{Lemma}
\newtheorem{proposition}[theorem]{Proposition}
\newtheorem{corollary}[theorem]{Corollary}
\newtheorem{assumption}{Assumption}
\newcounter{problemcounter}
\crefname{problemcounter}{ProblemCounter}{ProblemCounters}
\def\BibTeX{{\rm B\kern-.05em{\sc i\kern-.025em b}\kern-.08em
    T\kern-.1667em\lower.7ex\hbox{E}\kern-.125emX}}
\begin{document}
\title{Maximizing Reach-Avoid Probabilities for Linear Stochastic Systems via Control Architectures}
\author{Niklas Schmid$^{1}$, Jaeyoun Choi$^{2}$, Oswin So$^{2}$, and Chuchu Fan$^{2}$
    \thanks{$^{1}$Automatic Control Laboratory (IfA), ETH Z\"urich, 8092 Z\"urich, Switzerland
            {\tt\small nikschmid@ethz.ch}}%
    \thanks{$^{2}$Department of Aeronautics and Astronautics, MIT, Cambridge, Massachusetts, USA
            {\tt\small \{cjy0051, oswinso, chuchu\}@mit.edu}%
            }%
    \thanks{Work supported by the European Research Council under grant 787845 (OCAL) and the Swiss National Science Foundation under NCCR Automation under grant 51NF40\_225155. Niklas Schmid’s research visit at MIT was additionally supported by the Mobility Grant of the Swiss National Science Foundation under NCCR Automation.}%
    }

\maketitle

\begin{abstract}
    The maximization of reach-avoid probabilities for stochastic systems is a central topic in the control literature. Yet, the available methods are either restricted to low-dimensional systems or suffer from conservative approximations. To address these limitations, we propose control architectures that combine the flexibility of Markov Decision Processes with the scalability of Model Predictive Controllers. The Model Predictive Controller tracks reference signals while remaining agnostic to the stochasticity and reach-avoid objective. Instead, the reach-avoid probability is maximized by optimally updating the controller's reference online. To achieve this, the closed-loop system, consisting of the system and Model Predictive Controller, is abstracted as a Markov Decision Process in which a new reference can be chosen at every time-step. A feedback policy generating optimal references is then computed via Dynamic Programming. If the state space of the system is continuous, the Dynamic Programming algorithm must be executed on a finite system approximation. Modifications to the Model Predictive Controller enable a computationally efficient robustification of the Dynamic Programming algorithm to approximation errors, preserving bounds on the achieved reach-avoid probability. The approach is validated on a perturbed 12D quadcopter model in cluttered reach-avoid environments proving its flexibility and scalability.
\end{abstract}

\begin{IEEEkeywords}
Safety, Stochastic System, Markov Decision Processes, Model Predictive Control, Control Architecture
\end{IEEEkeywords}


\section{Introduction}
Formal certificates are of ever rising importance in the face of automation of safety-critical systems, such as UAVs \cite{schmid2022real}, air traffic \cite{prandini2008application} and resource management \cite{pitchford2007uncertainty}. For systems whose dynamics evolve deterministically, safety is popularly imposed via (constrained) optimal control approaches, e.g., Dynamic Programming (DP) \cite{lygeros1999controllers}, Reinforcement Learning (RL) \cite{garcia2015comprehensive}, Barrier Functions \cite{ames2016control}, Model Predictive Control (MPC) \cite{borrelli2017predictive}, and path planners \cite{karaman2011sampling}. While these methods usually offer trade-offs in terms of flexibility (of systems and control objectives) and scalability (in the state dimensionality or signal complexity), combining their strengths via cascaded control architectures is widely explored in robotics, control and learning \cite{dong2023review, matni2024towards,stamouli2025layered, fan2020fast, feher2020hierarchical, stulp2012reinforcement, benders2025embedded}. In these layered designs, flexible path planning algorithms, such as DP, RL or random trees, are used to guide scalable tracking algorithms such as MPC to perform complex control tasks. 

\begin{figure}[!ht]    
    \centering
    \includegraphics[width=.85\linewidth, trim={2.5cm 0cm 2.5cm 1cm}, clip]{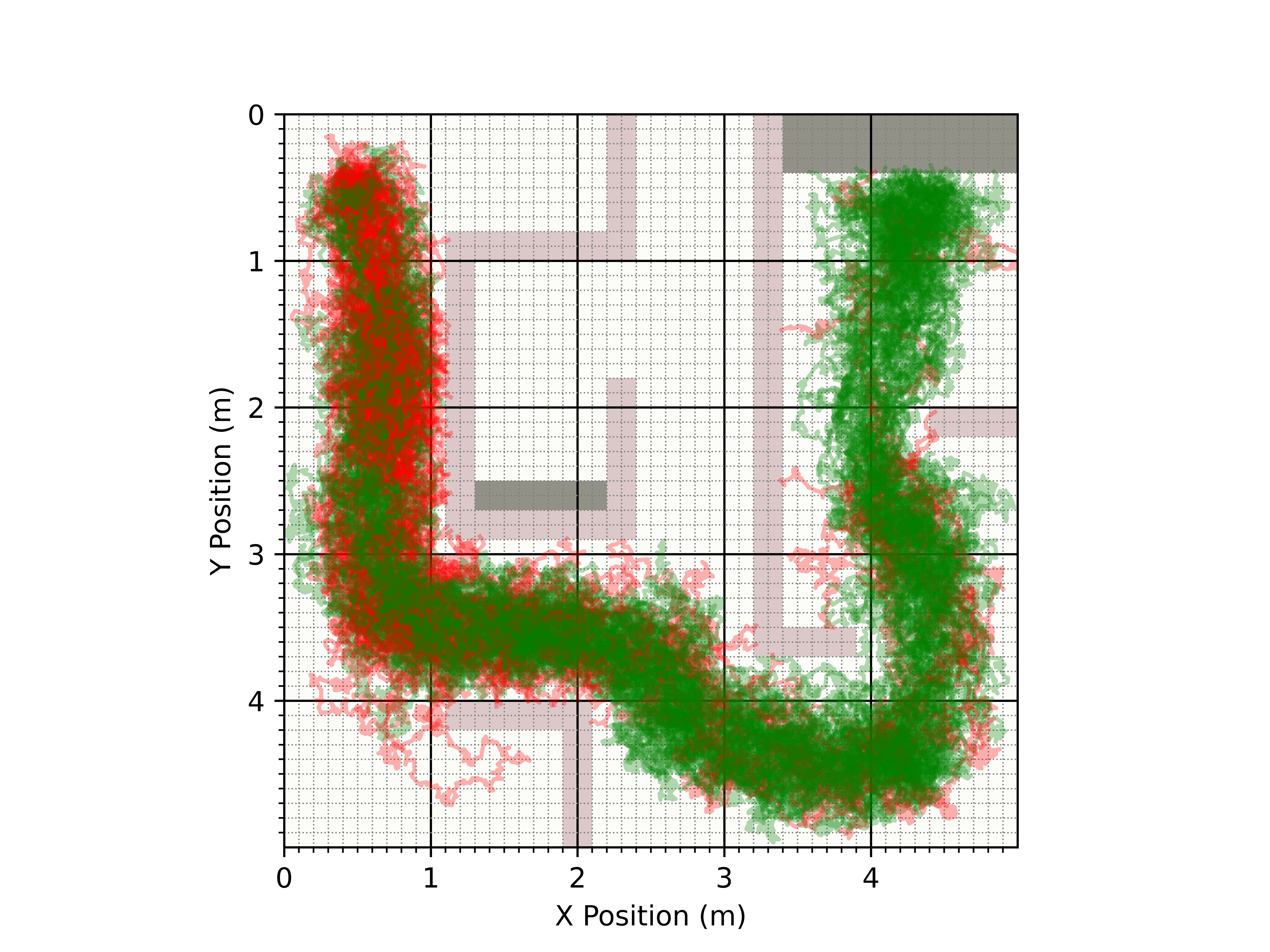}
    \caption{X-Y-trajectory of a perturbed quadcopter. The goal is to reach the gray sets while avoiding the red walls of the labyrinth. A control architecture is used to maximize the reach-avoid probability, achieving a success rate of $\SI{40}{\percent}$. Successful trajectories are green, unsuccessful red. The initial position is $(-0.5,-0.5)$. Details are provided in Section \ref{sec_numerical_example}.}
    \label{fig_num_lab}
\end{figure}

For stochastic systems, issues of scalability, or restrictions on the permissible system class, are even more pronounced. Generally, control approaches for safety-constrained stochastic systems are separable into two categories: 

The first category of controllers uses statistical bounds, which are based on moments or samples drawn from a distribution, to compute high-confidence uncertainty sets for the state trajectory. Robust control techniques are then used to ensure that the system trajectory is safe under all uncertainty realizations in this set, which consequently ensures a high probability of safety for the stochastic system. Respective approaches include stochastic tube \cite{ono_jcc_mpc,bavdekar}, scenario \cite{schildbach2014scenario} and conformal MPC \cite{lindemann2023safe} as well as supermartingale control barrier functions \cite{cosner2023robust}. Unfortunately, the resulting control performance is generally conservative, and the empirical safety of the controllers significantly higher than certified by the theoretical bounds. 

The second category of controllers, typically analyzed through the lens of Markov Decision Processes (MDPs), directly considers the distribution of the system trajectory to synthesize optimal controllers. While these methods provide tight statistical guarantees, they are computationally expensive and restricted to low-dimensional systems \cite{abate2008probabilistic, schmid2022probabilistic,miller2024unsafe}. While scalable heuristics exist \cite{thorpe2019model, bansal2021deepreach}, these approaches lack theoretical guarantees, which are highly desirable for safety-critical applications. 

The goal of this work is to maximize the reach-avoid probability of a stochastic system without resorting to conservative statistical bounds, while preserving scalability for a suitable class of systems. The core idea is to analyse the closed-loop behaviour of a stochastic system controlled by an MPC. We model the closed-loop system as an MDP in which reference signals are sent to the MPC, and exploit the flexibility of DP to generate those reference signals that yield a maximum reach-avoid probability.


Control architectures for stochastic systems remain relatively unexplored. A risk-aware architecture is proposed in \cite{akella2024risk}, which empirically achieves a safe control behaviour under disturbances while preserving scalability. In contrast, this work focuses on providing formal reach-avoid certificates.  

\textbf{Problem Formulation.} 
We design control hierarchies for linear stochastic systems with state space $\mathcal{X}=\mathbb{R}^n$, polyhedral input space $\mathcal{U}\in\mathcal{B}(\mathbb{R}^m)$ and dynamics
\begin{align}
    \label{eq_ct_lti_system}
\dot x(t) &= \underbrace{\footnotesize\begin{bmatrix}
        A_{c,1} & A_{c,2} \\ 0 & A_{c,4}
    \end{bmatrix}}_{A_c}\underbrace{\footnotesize\begin{bmatrix}
        x^{\text{s}}(t) \\ x^{\text{d}}(t)
    \end{bmatrix}}_{x(t)} +\underbrace{\footnotesize\begin{bmatrix}
        B_{c,1} \\ B_{c,2}
    \end{bmatrix}}_{B_c}u(t)+\underbrace{\footnotesize\begin{bmatrix}
        E_{c,1} \\ 0
    \end{bmatrix}}_{E_c}w(t),
\end{align}
where $w(\cdot)$ is an i.i.d. noise process and $(A_c,B_c)$ stabilizable. We explicitly discriminate between deterministically evolving states $x^{\text{d}}\in\mathbb{R}^{n_{\text{d}}}$ and probabilistically evolving states $x^{\text{s}}\in\mathbb{R}^{n_{\text{s}}}$; we will later exploit deterministically evolving states to reduce our algorithm's complexity. Let $s,\tau:\mathbb{R}^{n_{\text{s}}}\rightarrow\mathbb{R}$, $h_i^{\text{d}}\in\mathbb{R}^{n_{\text{d}}}$, $\mathbf{0}\in\mathbb{R}^{n_{\text{s}}}$ denote a zero-vector, and consider the sets
\begin{align*}
    &\mathcal{G}&&
    \hspace{-1em}=\left\{\begin{bmatrix}
    x^{\text{s}} \\ x^{\text{d}} 
\end{bmatrix}\in\mathcal{X}\; \middle|\;\underbrace{\begin{bmatrix}
    \mathbf{0} & h_i^{\text{d}}
\end{bmatrix}}_{h_i}\begin{bmatrix}
    x^{\text{s}} \\ x^{\text{d}} 
\end{bmatrix}\leq b \ \forall i\in[M_h]\right\}, \\
&\mathcal{S}&&\hspace{-1em}=\left\{\begin{bmatrix}
    x^{\text{s}} \\ x^{\text{d}} 
\end{bmatrix}\in\mathcal{G}\; \middle|\;s(x_s)\leq 0\right\}, 
\\
&\mathcal{T}&&\hspace{-1em}=\left\{\begin{bmatrix}
    x^{\text{s}} \\ x^{\text{d}} 
\end{bmatrix}\in\mathcal{G}\; \middle|\;\tau(x_s)\leq 0\right\}.
\end{align*}
The set $\mathcal{G}$ consists of $M_h\in\mathbb{N}$ half-space constraints on $x^{\text{d}}$ that must hold deterministically. In contrast, the safe set $\mathcal{S}\in\mathcal{B}(\mathcal{X})$ and target set $\mathcal{T}\in\mathcal{B}(\mathcal{S})$, which we assume to be compact, define a reach-avoid specification on $x^{\text{s}}$. Since $x^{\text{s}}$ evolves probabilistically, the specification may only be satisfied up to a probability. Maximizing this probability over a finite time horizon leads to our problem formulation.


\refstepcounter{problemcounter}
\begin{mdframed}
Find a controller $\pi^{\star}:\mathcal{X}\rightarrow\mathcal{U}$ solving
    \begin{subequations}
        \label{prob_reach_avoid}
        \begin{align}
            &\!{\max}_{\pi} && \ \mathbb{P}(\exists \tau\!\in\![0,T] \ | \  x([0,\tau))\!\in\!\mathcal{S}, x(\tau)\!\in\!\mathcal{T}) 
             \\
            & \text{s.t.} && \ \dot x(t) = A_cx(t) + B_cu(t) + E_cw(t)
            \\
            &&& \ x(t)\!\in\!\mathcal{G}, \  u(t)\!\in\!\mathcal{U}, \ u(t)\!=\!\pi(x(t))  \\ &&& \ \forall t\!\in\![0,T].
        \end{align}
    \end{subequations}
\end{mdframed}
\textbf{Running Example.} To illustrate our problem formulation, consider a quadcopter with position $p=\begin{bmatrix}
    p_x & p_y & p_z
\end{bmatrix}^{\top}$ and attitude $\theta\in\mathbb{R}^3$ (roll, pitch, yaw), leading to the state $x = \begin{pmatrix} p^\top & \dot{p}^\top & \theta^\top & \dot{\theta}^\top \end{pmatrix}^\top\in\mathbb{R}^{12}$, inputs described by the rotor thrust force and moments for roll, pitch and yaw leading to $u\in\mathbb{R}^4$, and linear dynamics \cite{schmid2022real}
\begin{align}
    \dot{x}(t) &= Ax(t) + Bu(t) + \begin{bmatrix} 
    1 & 0 & 0 & \dots & 0\\
    0 & 1 & 0 & \dots & 0
    \end{bmatrix}^{\top}w(t),
    \label{eq_linear_model_quadcopter}
\end{align}
where $w\in\mathbb{R}^2$ is an i.i.d. noise affecting the $x$- and $y$-position. Define ${x}^{\text{s}}=\begin{bmatrix}
    p_x & p_y 
\end{bmatrix}$ and ${x}^{\text{d}}=\begin{bmatrix}
     p_z & \dot p^{\top} & \theta^{\top} & \dot \theta^{\top}
\end{bmatrix}^{\top}$. The set ${\mathcal{G}}\subseteq\mathbb{R}^{12}$ encodes safety limits on the attitude, velocity and height that must be respected deterministically. The sets ${\mathcal{S}}\subseteq\mathbb{R}^{12}$ (safe, obstacle-free space) and ${\mathcal{T}}\subseteq\mathbb{R}^{12}$ (target) impose a reach-avoid specification on the x-y-position of the quadcopter which must be satisfied with maximum probability. 


\textbf{Contributions.} We summarize our key results as follows.
\begin{itemize}%
    \item We approach Problem \ref{prob_reach_avoid} via control architectures combining MPC with a DP reference generator. The MPC allows one to reduce the dimensionality of the DP planning space from the state space dimension $n$ to the dimensionality of the probabilistically evolving state component $n_{\text{s}}$.
    \item We robustify the framework to model abstractions necessary for DP on systems with continuous state spaces.
    \item We empirically demonstrate the scalability and flexibility of our method on a linear perturbed quadcopter model in cluttered reach-avoid environments.
\end{itemize}

After introducing the most elemental concepts of MPC and MDPs in Section \ref{sec_back}, we present our layered approach in Section \ref{sec_hierarchy}, followed by a robustification to model abstractions customary for DP on systems with continuous state spaces in Section \ref{sec_abstraction}. We numerically evaluate our approach in Section \ref{sec_numerical_example} and summarize our findings in Section \ref{sec_conclusion}.

\textbf{Mathematical Notation}
We denote by $[N]$ the set $\{0,1,\dots,N-1\},N\in\mathbb{N}$.  Let $\mathcal{A}, \mathcal{X}$ be two sets. The indicator function is defined as $\mathds{1}_{\mathcal{A}}(x)=1$ if $x\in {\mathcal{A}}$ and $\mathds{1}_{\mathcal{A}}(x)=0$ otherwise. We define the Minkowski sum of two sets as $\mathcal{A} \oplus \mathcal{X} = \{ a + x \mid a \in \mathcal{A}, x \in \mathcal{X} \}$ and the Pontryagin set difference as $\mathcal{A} \ominus \mathcal{X} = \{ a \mid a + x \in \mathcal{A} \text{ for all } x \in \mathcal{X} \}$. 
Further, let $\mathcal{B}(A)$ be the Borel $\sigma$-algebra generated by $A$, $\mathbb{P}$ and $\mathbb{E}$ the probability and expectation operators, respectively. The projection operator $\text{proj}_{\text{d}}(x)=x^{\text{d}}$ projects the state $x\in\mathcal{X}$ of system \eqref{eq_ct_lti_system} onto its deterministic state component. Finally, $B_r=\{x\in\mathbb{R}^n\ | \ \|x\|\leq r\}$ for some context-dependent $n\in\mathbb{N}$. 

\section{Background}
\label{sec_back}
\textbf{Model Predictive Control.}
Consider the system $x_{j+1} = f(x_j,u_j)$, $j\in\mathbb{N}$, with state and input constraints $\mathcal{X}$, $\mathcal{U}$, respectively, and let $\mathcal{X}_J$ be a control invariant constraint admissible set, i.e., for all $x_j\in\mathcal{X}_J$ there exist $u_j,u_{j+1},\dots\in\mathcal{U}$ such that $x_{j+1},x_{j+2}\dots\in\mathcal{X}_J$. Let $c_0,\dots,c_{J-1}:\mathcal{X}\times\mathcal{U}\rightarrow\mathbb{R}$, $c_J:\mathcal{X}\rightarrow\mathbb{R}$ denote some cost objective and $J\in\mathbb{N}$ the prediction horizon. At every time-step $j\in\mathbb{N}$, the MPC solves
\begin{subequations}
    \label{eq_basic_SMPC}
    \begin{align}
        &\min_{\{z_l\}_{l=0}^J, \{v_l\}_{l=0}^{J-1}} && c_J(z_J) + \sum_{l=0}^{J-1} c_l(z_l,v_l) \\
        &\text{s.t.} && z_0 = x_j, \quad  z_{l+1} = f(z_l, v_l) \\
        &&& z_l\in\mathcal{X},\quad z_J \in\mathcal{X}_J, \quad v_l\in\mathcal{U},\\
        &&& \forall \ l\in[J],
    \end{align}
\end{subequations}
and applies the minimizing $u_j=v_0$ to the system. Assuming that \eqref{eq_basic_SMPC} is feasible from $x_0$ at $j=0$, it remains feasible for all $j> 0$, which implies $x_j\in\mathcal{X}$, $u_j\in\mathcal{U}$ for all $j\geq 0$ \cite{rawlings2020model}.

\textbf{Markov Decision Processes.}
We define an MDP over a finite time horizon $N$ as a tuple $\mathcal{M}=(\mathcal{X}, \mathcal{U}, \mathcal{Q}, \ell_{0},\dots,\ell_{N})$, where the state space $\mathcal{X}$ and the input space $\mathcal{U}$ are Borel subsets of complete separable metric spaces equipped with the $\sigma$-algebras $\mathcal{B}(\mathcal{X})$ and $\mathcal{B}(\mathcal{U})$, respectively. Given a state $x_k\!\in\!\mathcal{X}$ and an input $u_k\!\in\!\mathcal{U}$, the stochastic kernel $\mathcal{Q}\!:\!\mathcal{B}(\mathcal{X})\!\times\!\mathcal{X}\!\times\!\mathcal{U}\!\rightarrow\![0,1]$ describes the stochastic state evolution $x_{k+1}\sim \mathcal{Q}(\cdot|x_k,u_k)$.
Additionally, $\ell_k\!:\!\mathcal{X}\!\times\mathcal{U}\!\rightarrow\!\mathbb{R}_{\geq 0}, k\in[N-1]$ and $\ell_N\!:\!\mathcal{X}\!\rightarrow\!\mathbb{R}_{\geq 0}$ denote measurable, non-negative functions called stage and terminal costs. A deterministic Markov policy is a sequence $\pi=(\mu_0,\dots,\mu_{N-1})$ of measurable maps $\mu_k\!:\!\mathcal{X}\!\rightarrow\!\mathcal{U}, k\in [N-1]$. We denote the set of deterministic Markov policies by $\Pi$. 


\section{Architectures for Linear Stochastic Systems}
\label{sec_hierarchy}
We approach Problem \ref{prob_reach_avoid} via a layered control architecture. The architecture consists of an MPC which tracks reference signals generated by a DP-based state feedback policy, see Fig.~\ref{fig_control_hierarchy}. 

\begin{figure}
    \centering
    \begin{tikzpicture}[
        block/.style={rectangle, draw, thick, text centered, rounded corners, minimum height=.7cm},
        label_style/.style={text width=8em, text centered, font=\small},
        arrow_style/.style={-latex, thick},
        ]
    
        \node (outer) [block, text width=2cm] {DP Policy};
        \node (inner) [block, text width=2cm, right=.5cm of outer] {MPC};
        \node (system) [block, text width=2cm, right=.5cm of inner] {System};
    
        
        \draw [arrow_style] (outer) -> node[above, midway] {$a$} (inner);
        \draw [arrow_style] (inner) -- node[above, midway] {$u$} (system);
        
        
       
        \draw [arrow_style] ([xshift=0.2cm]system.east) -- ([xshift=0.2cm,yshift=-0.8cm]system.east) -| (inner.south);
        
        \node (branch) [above right=0cm of system.east] {$x$};
        
        \draw [arrow_style] (system.east) --([xshift=0.2cm]system.east) -- ([xshift=0.2cm,yshift=-0.8cm]system.east) -| (outer.south);
        
        \node (branch) [below=0.37cm of inner.south, draw, fill, circle, inner sep=1pt] {};
    \end{tikzpicture}
    \caption{The MPC receives commands $a$, which contain a reference trajectory, and computes inputs $u$ based on the system state $x$. It ensures that ${x}(t)\in{\mathcal{G}}$ and $u(t)\in\mathcal{U}$ for all $t\geq 0$. The DP policy generates commands $a$ at time intervals of $\Delta_t$ based on $x(k\Delta_t)$, $k\in\mathbb{N}$. The command $a$ influences the behavior of the MPC, and it is chosen such that the closed-loop system achieves a maximum reach-avoid probability.}
    \label{fig_control_hierarchy}
\end{figure}
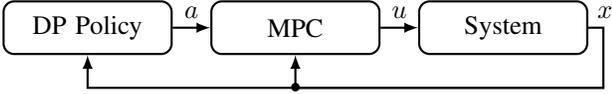

\subsection{Model Predictive Reference Tracking Controller}
Let $\Delta_t=T/N$ and $\delta_t=\Delta_t/J$ for some $N,J\in\mathbb{N}$. We denote the state at sub-time-steps as $x_{k,j}=x(k\Delta_t + j\delta_t)$, $k\in[N],j\in[J]$ and discretize system \eqref{eq_ct_lti_system} with a sampling-time of $\delta_t$,
\begin{align}
    x_{k,j+1} &=
    \underbrace{\begin{bmatrix}
        A_{1} & A_{2} \\ 0 & A_{4}
    \end{bmatrix}}_{A}\underbrace{\begin{bmatrix}
        x^{\text{s}}_{k,j} \\ x^{\text{d}}_{k,j}
    \end{bmatrix}}_{x_{k,j}}\!+\!\underbrace{\begin{bmatrix}
        B_{1} \\ B_{2}
    \end{bmatrix}}_{B}u_{k,j} + \underbrace{\begin{bmatrix}
        E_{1} \\ 0
    \end{bmatrix}}_{E}w_{k,j},
\label{eq_discrete_time_lti_system}
\end{align} 
where $x_{k,j}^{\text{d}}$ propagates according to the noise-free dynamics
\begin{align}
    \label{eq_discrete_time_lti_system_deterministic_subsystem}
    x^{\text{d}}_{k,j+1}=A_4x^{\text{d}}_{k,j}+B_2u^{\text{d}}_{k,j}.
\end{align}
Given tightened constraint sets $\hat{\mathcal{G}}, \hat{\mathcal{G}}_{J}\subseteq \mathcal{G}$, an initial state ${x}_{k,0}\in\hat{\mathcal{G}}_{J}$ for some $k\in[N]$, weighting matrices $Q\in\mathbb{R}^{n\times n}, Q\succeq 0$, $R\in\mathbb{R}^{m\times m}, R\succ 0$, and a reference sequence ${x}_{0}^{\text{ref}},\dots,{x}_{J}^{\text{ref}}\in\mathbb{R}^n$, the MPC solves
\begin{subequations}
    \makeatletter
    \def\@currentlabel{MPC}
    \makeatother
    \label{eq_smpc_inner_layer}
    \renewcommand{\theequation}{MPC.\arabic{equation}} 
\begin{align}
    &\min_{\{z_l\}_{l=j}^J,  \{v_l\}_{l=j}^{J-1}} && \sum_{l=j}^{J} \|z_l-x_{l}^{\text{ref}}\|_{Q} + \sum_{l=j}^{J-1}\|v_l\|_R \\
    &\text{s.t.} \  && z_{j}\!=\!x_{k,j}, \ \ z_{l+1}\!=\!Az_l\!+\!Bv_l, \label{eq_smpc_inner_layer_initial_state_constraint}\\
    &&& {z}_l\in\hat{\mathcal{G}}, \ \ {z}_{J} \in \hat{\mathcal{G}}_{J}, \ \ v_l\in\mathcal{U}, \label{SMPC_input_constraint}
    \\&&& \forall \ l=j\dots,J-1,
\end{align}
\end{subequations}
at every time-step $j\in[J]$ and applies the optimal input $u(t)=v_j$ to the system over the interval $t\in[k\Delta_t + j\delta_t, k\Delta_t + (j+1)\delta_t]$. 
While \eqref{eq_smpc_inner_layer} can only enforce constraints at discrete time steps, Problem \ref{prob_reach_avoid} asks for constraint satisfaction over the entire continuous duration $[0, T]$. To bridge the two, the following lemma demonstrates how the original constraint $\mathcal{G}$ can be tightened to $\hat{\mathcal{G}}$ such that discrete-time satisfaction of $\hat{\mathcal{G}}$ ensures continuous-time satisfaction of $\mathcal{G}$.
\begin{lemma}[Proof in Appendix \ref{app_proof_lem_cont_time_constraint_satisfaction}]\label{lem_cont_time_constraint_satisfaction}
    Let $\kappa_x=\max_{x^{\text{d}}\in\mathbb{R}^{n_{\text{d}}}}\|x^{\text{d}}\|$ subject to $h_i^{\text{d}}x^{\text{d}}\leq b$ for all $i\in[M_h]$, $\kappa_u=\max_{u\in\mathcal{U}}\|B_2u\|$, and
    \begin{align*}
        \eta\!=\!\begin{cases}
            (e^{\|\!A_4\!\|\delta_t}\!-\!1)\kappa_x\!+\!\frac{1}{\|\!A_4\!\|}(e^{\|\!A_4\!\|\delta_t}\!-\!1) \kappa_u & \text{if $\|A_4\|\!>\!0$} \\
            \delta_t \kappa_u & \text{otherwise.}
        \end{cases}
    \end{align*}
    Then, if $x_{k,j}\in\hat{\mathcal{G}}=\mathcal{G}\ominus B_{\eta}$,
    it follows that $x(t)\in\mathcal{G}$ for all $t\in[k\Delta_t+j\delta_t,k\Delta_t+(j+1)\delta_t]$.
\end{lemma}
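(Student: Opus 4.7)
The plan is to exploit two structural observations. First, $\mathcal{G}$ depends only on the deterministic sub-state $x^{\text{d}}$, since every row $h_i$ has the form $[\mathbf{0}, h_i^{\text{d}}]$. Second, $x^{\text{d}}$ is driven by the noise-free continuous-time dynamics $\dot x^{\text{d}}(t) = A_{c,4} x^{\text{d}}(t) + B_{c,2} u(t)$, independently of $w$. It therefore suffices to bound $\|x^{\text{d}}(t) - x^{\text{d}}_{k,j}\|$ by $\eta$ on $[t_0, t_0+\delta_t]$, where $t_0 = k\Delta_t + j\delta_t$. Once this bound is in place, the Pontryagin-difference definition of $\hat{\mathcal{G}}$ will propagate the half-space constraints from the sample $x_{k,j}$ to the whole continuous sub-interval.

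To derive the bound, I would apply the variation-of-constants formula
\begin{equation*}
x^{\text{d}}(t) - x^{\text{d}}_{k,j} = \bigl(e^{A_{c,4}(t-t_0)} - I\bigr) x^{\text{d}}_{k,j} + \int_{t_0}^{t} e^{A_{c,4}(t-\tau)} B_{c,2}\, u(\tau)\, d\tau,
\end{equation*}
and combine it with the standard estimates $\|e^{A s}\| \le e^{\|A\|s}$ and $\|e^{A s} - I\| \le e^{\|A\|s} - 1$ for $s \ge 0$. Using $\|x^{\text{d}}_{k,j}\| \le \kappa_x$ (which follows from $x_{k,j}\in\hat{\mathcal{G}}\subseteq\mathcal{G}$ and the very definition of $\kappa_x$) and $\|B_{c,2} u(\tau)\| \le \kappa_u$, the first term is at most $(e^{\|A_{c,4}\|\delta_t}-1)\kappa_x$ and the integral is bounded by $\kappa_u \int_{t_0}^{t} e^{\|A_{c,4}\|(t-\tau)} d\tau$. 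For $\|A_{c,4}\|>0$ the integral evaluates to $\frac{1}{\|A_{c,4}\|}(e^{\|A_{c,4}\|\delta_t}-1)\kappa_u$, reproducing the first branch of $\eta$. In the degenerate case $A_{c,4}=0$ the dynamics reduce to $\dot x^{\text{d}} = B_{c,2} u$, so the first term vanishes and the integral is bounded by $\delta_t\,\kappa_u$, giving the second branch.

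The final step closes the argument via the Pontryagin difference. Define the perturbation $\tilde\Delta(t) = (\mathbf{0},\, x^{\text{d}}(t)-x^{\text{d}}_{k,j})\in\mathbb{R}^n$; by the bound above, $\|\tilde\Delta(t)\| \le \eta$, so $\tilde\Delta(t) \in B_\eta$. Because every $h_i$ has zero entries on the $x^{\text{s}}$-block, one checks $h_i x(t) = h_i(x_{k,j}+\tilde\Delta(t))$, and $\hat{\mathcal{G}} = \mathcal{G}\ominus B_\eta$ then guarantees $x_{k,j}+\tilde\Delta(t)\in\mathcal{G}$, i.e., $h_i x(t) \le b$ for all $i$, hence $x(t)\in\mathcal{G}$.

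I do not anticipate a genuine obstacle: the argument is a routine variation-of-constants bound together with a Pontryagin-difference closure. The one subtle point is the case split at $\|A_{c,4}\|=0$, which can be handled either by a limiting argument in $\|A_{c,4}\|\to 0^+$ or by a direct integration of $\dot x^{\text{d}} = B_{c,2} u$; a second small check is that $\tilde\Delta$ can be taken with zero first block (so its norm truly equals $\|x^{\text{d}}(t)-x^{\text{d}}_{k,j}\|$) without changing $h_i x(t)$, which is exactly what the block structure of $h_i$ grants.
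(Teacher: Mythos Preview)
Your proposal is correct and follows essentially the same route as the paper's proof: both use the variation-of-constants formula together with the estimates $\|e^{As}\|\le e^{\|A\|s}$ and $\|e^{As}-I\|\le e^{\|A\|s}-1$ to bound $\|x^{\text{d}}(t)-x^{\text{d}}_{k,j}\|$ by $\eta$, and then close via the half-space/Pontryagin-difference argument. The only cosmetic difference is that the paper packages the displacement estimate into an auxiliary lemma for a system $\dot z = Az + d$ with constant $d$ (invoking the zero-order-hold of the MPC), whereas you argue directly with a time-varying $u(\tau)\in\mathcal{U}$.
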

The scalar $\eta$ bounds the maximum change of $x^{\text{d}}$ within a time-step $j$ under any input $u\in\mathcal{U}$ and state $x_{k,j}\in\mathcal{G}$. In our running example, $\eta$ bounds the change of the quadcopter's $z$-position, velocities, attitude and attitude rates over a time-duration of $\delta_t$. 

Let $\hat{\mathcal{G}}$ be defined as in Lemma \ref{lem_cont_time_constraint_satisfaction}. Then, by definition of $\mathcal{G}$, $\hat{\mathcal{G}}=\mathbb{R}^{n_{\text{s}}}\times \hat{\mathcal{G}}^{\text{d}}$ with $\hat{\mathcal{G}}^{\text{d}}=\{x^{\text{d}}\in\mathbb{R}^{n_{\text{d}}}\ | \ h_i^{\text{d}}x^{\text{d}}+\|h_i^{\text{d}}\|\eta\leq b \ \forall i\in[M_h]\}$. Let the set $\hat{\mathcal{G}}_J^{\text{d}}$ denote a constraint-admissible control invariant set for the deterministic subsystem \eqref{eq_discrete_time_lti_system_deterministic_subsystem} subject to constraints $x_{k,j}^{\text{d}}\in\hat{\mathcal{G}}^{\text{d}}$, $u_{k,j}\in\mathcal{U}$ and denote the extension to the space $\mathcal{X}$ as $\hat{\mathcal{G}}_J=\mathbb{R}^{n_{\text{s}}}\times \hat{\mathcal{G}}_J^{\text{d}}$. Then, we show that starting from $x(0)\in\hat{\mathcal{G}}_J$, continuous-time constraint satisfaction and recursive feasibility of \eqref{eq_smpc_inner_layer} is guaranteed.
\begin{proposition}
    [Proof in Appendix \ref{app_proof_prop_recursive_feasibility_of_MPC}]\label{prop_recursive_feasibility_of_MPC}
    If $x(0)\in\hat{\mathcal{G}}_J$, then \eqref{eq_smpc_inner_layer} is feasible for all $j\in[J]$ and $k\in[N]$. Further, system \eqref{eq_ct_lti_system} controlled by \eqref{eq_smpc_inner_layer} satisfies ${x}(t)\in{\mathcal{G}}$ and $u(t)\in\mathcal{U}$ for all $t\in [0, T]$. Lastly, ${x}(k\Delta_t)\in\hat{\mathcal{G}}_J$ for all $k\in [N+1]$. 
\end{proposition}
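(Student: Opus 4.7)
The plan is to adapt the standard recursive feasibility argument for MPC, accounting for the episodic reference updates every $\Delta_t = J\delta_t$ and exploiting the fact that, thanks to the block-triangular structure of $E$ in \eqref{eq_discrete_time_lti_system}, the component $x^{\text{d}}$ evolves deterministically and matches the MPC's prediction exactly. Since $\hat{\mathcal{G}} = \mathbb{R}^{n_{\text{s}}} \times \hat{\mathcal{G}}^{\text{d}}$ and $\hat{\mathcal{G}}_J = \mathbb{R}^{n_{\text{s}}} \times \hat{\mathcal{G}}_J^{\text{d}}$ only constrain $x^{\text{d}}$, the stochastic component $x^{\text{s}}$ never enters the feasibility conditions.

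For the base case, initial feasibility at $k=j=0$ follows from $x^{\text{d}}(0) \in \hat{\mathcal{G}}_J^{\text{d}}$ and control invariance of $\hat{\mathcal{G}}_J^{\text{d}}$: one can pick any $\{v_l\}_{l=0}^{J-1} \subset \mathcal{U}$ keeping $z_l^{\text{d}}$ inside $\hat{\mathcal{G}}_J^{\text{d}} \subseteq \hat{\mathcal{G}}^{\text{d}}$ for all $l$ and terminating in $\hat{\mathcal{G}}_J^{\text{d}}$; the corresponding noise-free $\{z_l\}$ satisfies all constraints of \eqref{eq_smpc_inner_layer} since $\hat{\mathcal{G}}$ does not restrict $x^{\text{s}}$. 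For recursive feasibility within a single episode, I would use the standard tail-shift: the tail of the previous optimal plan is feasible at the next sub-step because $x_{k,j+1}^{\text{d}}$ coincides exactly with the predicted $z_{j+1}^{\star,\text{d}}$ (noise leaves $x^{\text{d}}$ untouched), while the mismatch in $x^{\text{s}}$ is inconsequential. For the hand-off between episodes, the terminal constraint $z_J^{\star} \in \hat{\mathcal{G}}_J$ combined with deterministic evolution of $x^{\text{d}}$ yields $x_{k+1,0} = x_{k,J} \in \hat{\mathcal{G}}_J$, re-establishing the base case. Induction on $(k,j)$ delivers feasibility for all sub-steps and, as a byproduct, $x(k\Delta_t) \in \hat{\mathcal{G}}_J$ for all $k \in [N+1]$.

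Feasibility of \eqref{eq_smpc_inner_layer} implies $x_{k,j} \in \hat{\mathcal{G}}$ and $u_{k,j} \in \mathcal{U}$ at every discrete sub-step. Continuous-time constraint satisfaction $x(t) \in \mathcal{G}$ for all $t \in [0,T]$ then follows by applying Lemma \ref{lem_cont_time_constraint_satisfaction} to each sub-interval, and $u(t) \in \mathcal{U}$ for all $t$ holds via \eqref{SMPC_input_constraint} since the input is held constant between sampling instants.

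The main obstacle is conceptual rather than technical: one must recognize that the noise enters no quantity appearing in the MPC's constraints, so the deterministic recursive feasibility argument applies verbatim despite the stochasticity. Beyond that, the proof is bookkeeping to stitch together the shrinking-horizon subproblems within each episode and the hand-off at episode boundaries through $\hat{\mathcal{G}}_J$.
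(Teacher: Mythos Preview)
Your proposal is correct and follows essentially the same argument as the paper's proof: exploit that the constraints $\hat{\mathcal{G}}$, $\hat{\mathcal{G}}_J$ act only on the deterministic component $x^{\text{d}}$, so the shifted tail of the previous plan is feasible at the next sub-step, the terminal constraint hands off $x_{k,J}\in\hat{\mathcal{G}}_J$ across episodes, and Lemma~\ref{lem_cont_time_constraint_satisfaction} lifts the discrete-time inclusion $x_{k,j}\in\hat{\mathcal{G}}$ to continuous-time satisfaction of $\mathcal{G}$. The paper's proof is organized slightly differently (it writes out the half-space inequalities explicitly rather than arguing at the set level), but the content and key observations are identical.
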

The proof shows recursive feasibility through the terminal set $\hat{\mathcal{G}}_J$ and the fact that \eqref{eq_smpc_inner_layer} only imposes constraints on $x^{\text{d}}$. Combined with Lemma \ref{lem_cont_time_constraint_satisfaction} the constraint $\mathcal{G}$ is satisfied in continuous time.

\subsection{Dynamic Programming based Reference Generator}
\label{sec_arch_outer}
In the spirit of our layered architecture, we do not impose the constraints $\mathcal{S}$ and $\mathcal{T}$ within \eqref{eq_smpc_inner_layer}. Instead, a DP-based state feedback policy must generate reference signals that guide \eqref{eq_smpc_inner_layer} to a maximum reach-avoid probability. The DP-based reference generator is executed at fixed time-intervals $\Delta_t$ and acts as a discrete time state-feedback controller on ${\mathcal{X}}$. Given $x_k={x}(k\Delta_t)$, $k\in[N]$, it returns a reference sequence $a_k=(x_{0}^{\text{ref}},\dots,x_{J}^{\text{ref}})\in\mathcal{A}\subseteq \mathcal{X}^{J+1}$ chosen from a finite set $\mathcal{A}$. We refer to the actions $a_k$ as commands. The goal is to choose those commands that maximize the reach-avoid probability. 

Note that \eqref{eq_smpc_inner_layer} is a state-feedback controller. Therefore, its optimal solution at time-step $k$ only depends on the state $x_k$. Further, the dynamics \eqref{eq_ct_lti_system} are Markov, which renders the closed-loop system a discrete time Markov Process with time-steps $j$. The choice of commands $a_k$ turns the closed-loop system into an MDP with state $x_k$, input $a_k$ and time-steps $k\in[N]$. This enables to use DP to optimally select $a_k$ as a function of $x_k$. 
    
To this end, we model all states in $\mathcal{S}^c\cup \mathcal{T}$ as absorbing, i.e., any state trajectory satisfying $x(t_0)\in\mathcal{S}^c\cup \mathcal{T}$ for some $t_0\in[k\Delta_t,(k+1)\Delta_t]$ remains fixed at $x(t)=x(t_0)$ for all $t\geq t_0$. We further assume the following.
\begin{assumption}[Existence of a Kernel]\label{ass_existence_of_kernel}
    System \eqref{eq_ct_lti_system} in closed-loop with \ref{eq_smpc_inner_layer} yields a discrete time transition kernel $\mathcal{Q}:\mathcal{B}(\mathcal{X})\times \mathcal{X}\times\mathcal{A}\rightarrow[0,1]$, such that $\mathcal{Q}(C|{x}_k,a_k) = \mathbb{P}({x}_{k+1}\in C|{x}_k,a_k)$ for all $k=0,\dots,N-1$, ${x}_k\in\hat{\mathcal{G}}_J$, $a_k\in\mathcal{A}$ and $C\in\mathcal{B}({\mathcal{X}})$. 
\end{assumption}
We thereby define $\mathcal{Q}(x_{k+1}=x_k|{x}_k,\cdot)=1$ and zero otherwise for any $x_k\in\hat{\mathcal{G}}_J^c$. Note that this, as well as $\mathcal{S}$ and $\mathcal{T}$ being absorbing, are modeling choices for the MDP, which do not necessarily reflect the dynamics of the physical system. 

Under Assumption \ref{ass_existence_of_kernel}, we summarize the closed-loop system as the MDP $\mathcal{M}=(\mathcal{X} ,\mathcal{A},\mathcal{Q},\ell_0,\dots,\ell_N)$. 
Then, choosing $\ell_k(\cdot,\cdot)=0$ and $\ell_N(x_N)=\mathds{1}_{\mathcal{T}}(x_N)$, any policy that maximizes $\mathbb{E}[\ell_N({x}_N)+\sum_{k=0}^{N-1}\ell_k({x}_k,u_k)]$ also maximizes the reach-avoid probability in $\mathcal{M}$. 

Using DP, we recursively compute value functions $V_k({x}_k)=\sup_{\pi\in\Pi}\mathbb{P}(x_N\in\mathcal{T}|{x}_k,\pi)$. Note that, since $\mathcal{T}\cup\mathcal{S}^c$ is absorbing, for any $k\in[N]$ $V_k({x}_k)=1$ if $x_k\in\mathcal{T}$ and $V_k({x}_k)=0$ if $x_k\in\mathcal{S}^c$. This restricts analysis to the compact set ${x}_k\in{\mathcal{S}}\setminus{\mathcal{T}}$. For all ${x}_k, {x}_N\in{\mathcal{S}}\setminus{\mathcal{T}}$, $k\in[N]$, let
\begin{subequations}
    \label{eq_true_reach_avoid_probabilities}
    \begin{align}
    V_N({x}_N) &= 0, \\
    V_k({x}_k) &= \max_{a_k\in\mathcal{A}} \!\int_{{\mathcal{X}}}\hspace{-.3em}V_{k+1}({x}_{k+1})\mathcal{Q}(d{x}_{k+1}|{x}_k,a_k),
\end{align}
\end{subequations}
where a maximizing argument $\pi_k({x}_k)\in{\arg\max}_{a_k\in\mathcal{A}} \int_{{\mathcal{X}}}V_{k+1}({x}_{k+1})\mathcal{Q}(d{x}_{k+1}|{x}_k,a_k)$ exists for every ${x}_k\in{\mathcal{S}\setminus\mathcal{T}}$ by finiteness of $\mathcal{A}$ \cite{schmid2022probabilistic}. Generating references $a_k=\pi_k(x_k)$ for \eqref{eq_smpc_inner_layer} at every time-step $k\in[N]$ then yields a reach-avoid probability of $V_0(x_0)$ \cite{abate2008probabilistic}. 

\section{Abstraction and Error Bounds}
\label{sec_abstraction}
Since the state space ${\mathcal{X}}$ is continuous, the minimization in \eqref{eq_true_reach_avoid_probabilities} involves an infinite number of states. To render the problem computationally tractable, we approximate $\mathcal{M}$ via a finite gridding abstraction \cite{abate2010approximate}. 

\subsection{Gridding Abstraction}
To construct a gridding abstraction of $\mathcal{M}$, let $\mathcal{H}=\{{x}\in{\mathcal{X}}\ | \ \|{x}\|_{\infty}\leq \frac{1}{2}\zeta\}$ be a closed hypercube for some $\zeta>0$. We cover $\mathcal{X}$ using a tiled grid of $M_x$ (possibly infinite) disjoint subsets $\mathcal{H}_0,\dots,\mathcal{H}_{M_x}\in \mathcal{B}(\mathcal{H})$ translated by the centers $c_i\in\mathcal{X}$ for $i\in[M_x]$, respectively. We denote the collection of all centers as $\mathcal{C}=\{{c}_i\in\mathcal{H}_i\}_{i=1}^{M_x}$ and define $x(0)\in\mathcal{H}_0$.

The gridding abstraction approximates all states ${x}\in\mathcal{H}_i$ by their corresponding center ${x}={c}_i$, $i=1,\dots,M_x$, reducing the continuous set $\mathcal{X}$ to the countable set of states $\mathcal{C}$. Consider the MDP $\hat{\mathcal{M}}=(\mathcal{C},\mathcal{A},\hat{\mathcal{Q}},{\ell}_0,\dots,{\ell}_N)$, where the cost functions $\ell_0,\dots,\ell_{N}$ remain as before. Given some $i,j\in[M_x]$, the transition kernel is approximated as $\hat{\mathcal{Q}}({c}_i|x,a)=\mathcal{Q}(\mathcal{H}_i|c_j,a)$ for all $x\in\mathcal{H}_j$. It can be empirically estimated via Monte-Carlo simulations of the MPC-controlled system from each $x\in\mathcal{C}$ and $a\in\mathcal{A}$ over a time-duration of $\Delta_t$. A policy $\pi:\mathcal{C}\rightarrow\mathcal{A}$ computed on $\hat{\mathcal{M}}$ is executed on $\mathcal{M}$ by applying $a_k=\pi_k({c}_i)$ whenever ${x}_k\in\mathcal{H}_i$. 

Unfortunately, if for any $i,j\in[M_x]$ the probability $\mathcal{Q}(\mathcal{H}_i|x,a)$ varies significantly across ${x}\in\mathcal{H}_j$, the approximation ${\mathcal{Q}}(\mathcal{H}_i|{c}_j,a)\approx\mathcal{Q}(\mathcal{H}_i|x,a)$ becomes inaccurate. As a consequence, the reach-avoid probability maximized by $\pi$ in $\hat{\mathcal{M}}$ may not be achieved when executing $\pi$ on $\mathcal{M}$. 

To overcome this issue, we robustify the architecture to errors incurred by the gridding abstraction. Consider some $x(t_0)\in\mathcal{H}_i$ and $x'(t_0)=c_i$ which evolve via the dynamics \eqref{eq_ct_lti_system} under the same noise $w(\cdot)$ and input $u(\cdot)$. Then, the tube $\mathcal{E}(t-t_0)=\{e^{A_c(t-t_0)}x\ | \ x\in\mathcal{H}\}$ contains the deviation of $x(t)$ from $x'(t)$,  
\begin{align}
    \label{eq_tube_bound}
    x(t)-x'(t)\in\mathcal{E}(t-t_0)
\end{align}for all $t\geq t_0$. To simplify the upcoming discussion, we over-approximate the union of $\mathcal{E}(t)$ over an interval $\Delta_t$ using a time-invariant ball $B_r$ of radius $r\geq 0$.
\begin{lemma}[Proof in Appendix \ref{proof_lem_B_bounds_Epsilon}]\label{lem_B_bounds_Epsilon}
    Let $r=e^{\|A_c\|\Delta_t}\max_{{x}\in\mathcal{H}}\|{x}\|$, then $\bigcup_{t\in[0,\Delta_t]}\mathcal{E}(t)\subseteq B_r$. 
\end{lemma}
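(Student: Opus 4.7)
The plan is to bound the norm of an arbitrary element of $\mathcal{E}(t)$ by $r$, uniformly over $t \in [0, \Delta_t]$. Fix $t \in [0, \Delta_t]$ and let $y \in \mathcal{E}(t)$, so that $y = e^{A_c t} x$ for some $x \in \mathcal{H}$. Using the submultiplicativity of the induced norm, I would write $\|y\| \leq \|e^{A_c t}\| \, \|x\|$, and then bound the matrix exponential via the standard estimate $\|e^{A_c t}\| \leq e^{\|A_c\| t}$, which follows directly from the series definition and the triangle inequality.

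Next, since $\|A_c\| \geq 0$, the scalar map $t \mapsto e^{\|A_c\| t}$ is nondecreasing on $[0, \Delta_t]$, so $e^{\|A_c\| t} \leq e^{\|A_c\| \Delta_t}$ for every $t$ in that interval. Combining the two bounds and maximizing $\|x\|$ over $x \in \mathcal{H}$ (which is attained since $\mathcal{H}$ is compact) gives
\begin{equation*}
\|y\| \;\leq\; e^{\|A_c\| \Delta_t} \max_{x \in \mathcal{H}} \|x\| \;=\; r,
\end{equation*}
so $y \in B_r$. Taking the union over $y \in \mathcal{E}(t)$ and over $t \in [0, \Delta_t]$ yields the claim.

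There is no substantial obstacle here: the statement is essentially a norm comparison, and the only mildly subtle point is that the bound must be made uniform in $t$, which is handled by monotonicity of the scalar exponential. Care should only be taken to use the same operator norm consistently throughout (the one induced by $\|\cdot\|$), and to note that $\max_{x\in\mathcal{H}}\|x\|$ is finite by compactness of the hypercube $\mathcal{H}$.
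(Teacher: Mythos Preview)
Your proposal is correct and takes essentially the same approach as the paper: both bound $\|e^{A_c t}x\|$ via $\|e^{A_c t}\|\leq e^{\|A_c\|t}\leq e^{\|A_c\|\Delta_t}$ and then maximize over $x\in\mathcal{H}$. The paper additionally frames the argument through the error $\xi(t)=x(t)-x'(t)$ of two trajectories under identical noise and input, but this is contextual motivation rather than a different proof technique.
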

In terms of the quadcopter example, given the same noise $w(\cdot)$ and input $u(\cdot)$, this means that starting from two states with maximum distance $\frac{1}{2}\zeta$, e.g., different velocities, yields trajectories that deviate from each other no further than $r$ over a time-step $k$.

While the abstraction only considers trajectories $x(t)$ evolving from the centers $c_i\in\mathcal{C}$, $i\in[M_x]$, if the constraints are strictly satisfied with a distance $r$, they are also satisfied for trajectories evolving from anywhere within $\mathcal{H}_i$.


\subsection{Robust Model Predictive Reference Tracking Controller}
Eq.~\eqref{eq_tube_bound} only holds if the two trajectories $x(t)$, $x'(t)$ evolve under the same noise $w(\cdot)$ and input $u(\cdot)$. Consequently, if we want to exploit this result to robustify our control architecture, we need to modify \eqref{eq_smpc_inner_layer} such that under a given command $a_k=({x}^{\text{ref}}_0,\dots,{x}^{\text{ref}}_J)$ and fixed noise sequence $w(\cdot)$ the same input sequence $u_{k,0},\dots,u_{k,J-1}$ is generated for every $x_k\in\mathcal{H}_i$ for each $i\in[M_x]$. In place of \eqref{eq_smpc_inner_layer}, this is achieved by the robust MPC%
\begin{subequations}%
    \makeatletter%
    \def\@currentlabel{RMPC}%
    \makeatother%
    \label{eq_smpc_inner_layer_robust}%
    \renewcommand{\theequation}{RMPC.\arabic{equation}}%
\begin{align}
    &&\min_{\{z_l\}_{l=j}^J,  \{u_l\}_{l=j}^{J-1}} & \sum_{l=j}^{J} \|z_l\!-\!x_{l}^{\text{ref}}\|_{ Q}\!+\!\sum_{l=j}^{J-1}\|u_l\|_R \\
    &&\text{s.t.} \  & z_j = x_{k,j} - A^j(x_{k,0}-c_i) \label{eq_smpc_inner_layer_initial_state_constraint_robust}\\
    &&& z_{l+1} = Az_l + Bu_l \\
    &&& {z}_{l} \in \Tilde{\mathcal{G}}\label{smpc_terminal_state_constraint}, z_{J}\in \Tilde{\mathcal{G}}_J\ominus\mathcal{H}\ominus B_r 
    \\ &&& u_l\in\mathcal{U}
    \\&&& \forall \ l=j,\dots,J-1.
\end{align}
\end{subequations}
The state constraint \eqref{eq_smpc_inner_layer_initial_state_constraint_robust} initializes $z_j$ as if the trajectory would have evolved from the state $x_{k,0}=c_i$. Using the following result, we design the constraints $\Tilde{\mathcal{G}}$ and $\Tilde{\mathcal{G}}_J$ to ensure that $x_{k,j}\in\hat{\mathcal{G}}$ is still satisfied for all $j\in[J]$ despite the modified initial state constraint. 
\definecolor{myBlue}{HTML}{1D71B8}
\definecolor{myRed}{HTML}{BE1622}
\begin{figure}[!tb]
    \centering
    \begin{tikzpicture}
        \node[anchor=south west, inner sep=0] (image) at (0,0) {
            \includegraphics[width=.6\linewidth]{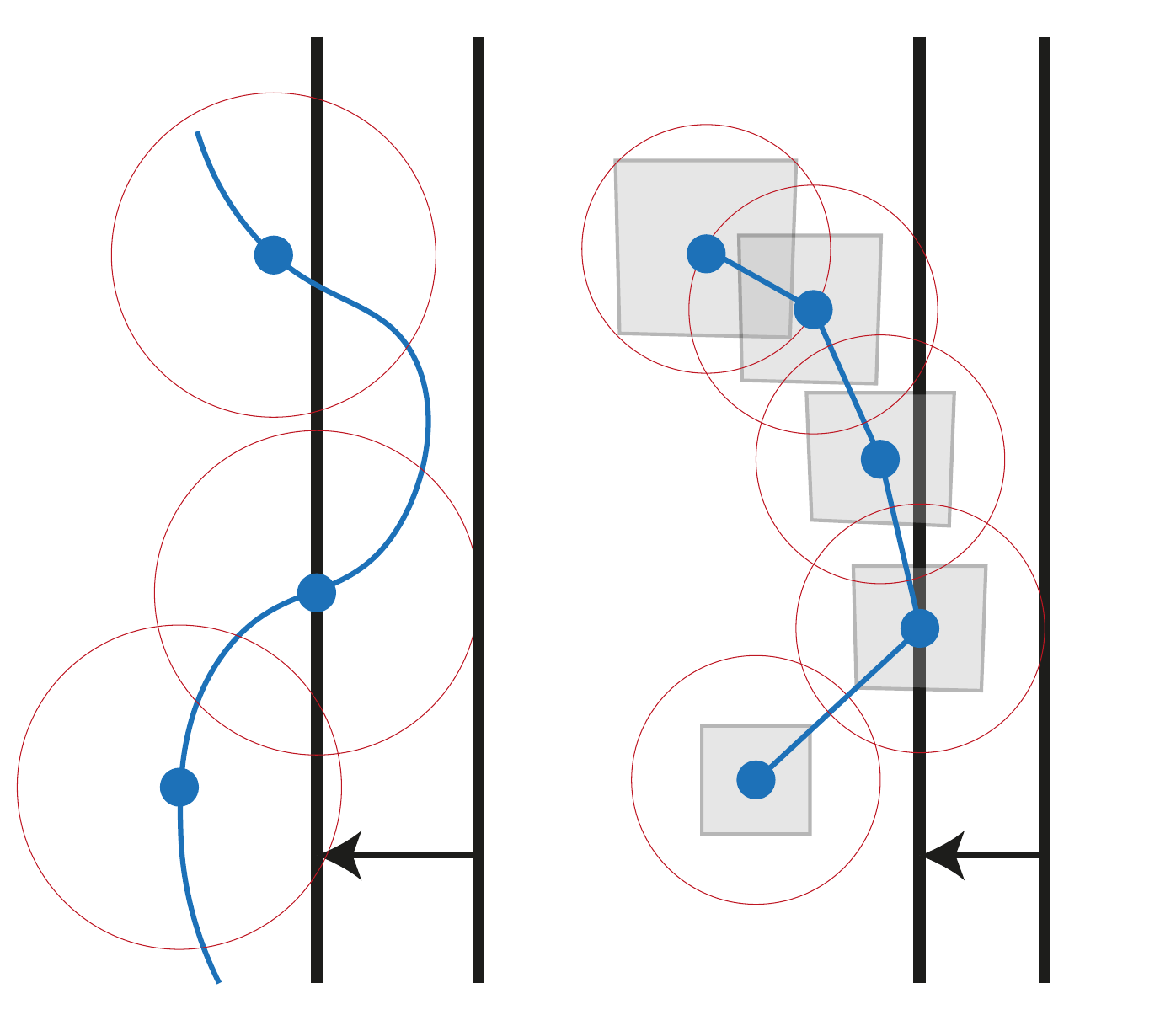}
        };

        \begin{scope}[x={(image.south east)}, y={(image.north west)}]
            
            \node[text=black] at (0.23, 0.05) {$\hat{\mathcal{G}}$};
            \node[text=black] at (0.37, 0.04) {$\mathcal{G}$};
            
            \node[text=black] at (0.35, 0.2) {$\eta$};

            \node[text=myRed] at (0.02, 0.09) {$B_{\eta}$};
            \node[text=myBlue] at (0.12, 0.03) {$x(t)$};
            \node[text=myBlue] at (0.075, 0.23) {$x_{k,j}$};
            \node[text=myBlue] at (0.16, 0.43) {$x_{k,j+1}$};
            
            \node[text=black] at (0.74, 0.05) {$\Tilde{\mathcal{G}}$};
            \node[text=black] at (0.85, 0.05) {$\hat{\mathcal{G}}$};
            
            \node[text=black] at (0.84, 0.2) {$r$};

            \node[text=myRed] at (0.57, 0.09) {$B_r$};
            \node[text=myBlue] at (0.56, 0.235) {$x_{k,j}$};
            \node[text=myBlue] at (0.67, 0.38) {$x_{k,j+1}$};
            
            \node[text=gray] at (0.7, 0.14) {$\mathcal{E}(0)$};
            
        \end{scope}
    \end{tikzpicture}
    \caption{The constraint $x_{k,j}\in\hat{\mathcal{G}}=\mathcal{G}\ominus B_{\eta}$ ensures that $x(t)\in\mathcal{G}$, for all $t\in[k\Delta_t+j\delta_t,k\Delta_t + (j+1)\delta_t]$, $k\in[N]$, $j\in[J]$. The constraint $x_{k,j}\in\Tilde{\mathcal{G}}=\hat{\mathcal{G}}\ominus B_{r}$ ensures that $x_{k,j}\in\hat{\mathcal{G}}$ even when the initial state $x_{k,0}$ is uncertain within a set $\mathcal{H}$. The red circles indicate the balls $B_{\eta}$ and $B_r$, the gray boxes indicate the tube $\mathcal{E}$.}
    \label{fig_sets_G}
\end{figure}

    
    
    
    
    

\begin{corollary}
    \label{lem_if_in_hat_then_in_tilde}
    Let $x(k\Delta_t)\in\mathcal{H}_i$, $x'(k\Delta_t)=c_i$, $k\in[N]$, propagate through \eqref{eq_ct_lti_system} under the same input sequence $u(\cdot)$ and noise sequence $w(\cdot)$. Then, for all $j\in[J]$, $x'(k\Delta_t+j\delta_t)\in\hat{\mathcal{G}}\ominus B_r$ implies $x(k\Delta_t+j\delta_t)\in\hat{\mathcal{G}}$.
\end{corollary}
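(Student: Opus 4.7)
The plan is to chain three facts that are already set up in the excerpt: linearity of the error dynamics, Lemma~\ref{lem_B_bounds_Epsilon}, and the definition of the Pontryagin difference. The structure is close to a direct application of \eqref{eq_tube_bound}, and the main work is simply verifying that all three ingredients apply.

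First, I introduce the error trajectory $e(t) = x(t) - x'(t)$. Because both $x(\cdot)$ and $x'(\cdot)$ evolve under the same input $u(\cdot)$ and noise $w(\cdot)$, the $B_c u$ and $E_c w$ contributions in \eqref{eq_ct_lti_system} cancel, so $e(\cdot)$ satisfies the homogeneous linear ODE $\dot{e}(t) = A_c e(t)$, and therefore $e(t) = e^{A_c(t-k\Delta_t)} e(k\Delta_t)$ for all $t \geq k\Delta_t$. Second, since $\mathcal{H}_i$ is the translate $c_i + \mathcal{H}$ (by construction of the grid) and $x(k\Delta_t) \in \mathcal{H}_i$, $x'(k\Delta_t) = c_i$, it follows that $e(k\Delta_t) \in \mathcal{H}$. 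Combining these two observations gives $e(t) \in \mathcal{E}(t - k\Delta_t)$ for every $t \geq k\Delta_t$, which is precisely \eqref{eq_tube_bound}.

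Third, for any $j \in [J]$, set $t = k\Delta_t + j\delta_t$, so that $t - k\Delta_t = j\delta_t \in [0, \Delta_t]$. Lemma~\ref{lem_B_bounds_Epsilon} then yields
\begin{equation*}
    x(k\Delta_t + j\delta_t) - x'(k\Delta_t + j\delta_t) \in \mathcal{E}(j\delta_t) \subseteq B_r.
\end{equation*}
Finally, by the definition of the Pontryagin difference given in the notation section, $x'(k\Delta_t + j\delta_t) \in \hat{\mathcal{G}} \ominus B_r$ means that $x'(k\Delta_t + j\delta_t) + b \in \hat{\mathcal{G}}$ for every $b \in B_r$. Choosing $b = x(k\Delta_t + j\delta_t) - x'(k\Delta_t + j\delta_t)$ delivers $x(k\Delta_t + j\delta_t) \in \hat{\mathcal{G}}$, completing the argument.

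There is no real obstacle here: the result is essentially a bookkeeping corollary of Lemma~\ref{lem_B_bounds_Epsilon}. The only subtlety worth being careful about is that the error dynamics are genuinely homogeneous, which relies on both trajectories sharing the same $u(\cdot)$ and $w(\cdot)$; this is exactly what motivated introducing the modified initial-state constraint \eqref{eq_smpc_inner_layer_initial_state_constraint_robust} in \eqref{eq_smpc_inner_layer_robust}, so that under a fixed command $a_k$ and noise realization the robust MPC produces the same input sequence for every $x_{k,0} \in \mathcal{H}_i$.
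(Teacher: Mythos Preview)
Your argument is correct and is exactly the unpacking of what the paper means by ``an immediate consequence of Lemma~\ref{lem_B_bounds_Epsilon}'': homogeneous error dynamics give $e(t)\in\mathcal{E}(t-k\Delta_t)\subseteq B_r$, and the Pontryagin difference finishes it. One tiny imprecision: by construction $\mathcal{H}_i$ is a translated Borel \emph{subset} of $\mathcal{H}$, not necessarily all of $c_i+\mathcal{H}$, but this only strengthens your conclusion $e(k\Delta_t)\in\mathcal{H}$, so nothing changes.
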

The corollary is an immediate consequence of Lemma \ref{lem_B_bounds_Epsilon}. Analogously as for $\hat{\mathcal{G}}$, we define $\Tilde{\mathcal{G}}=\mathbb{R}^{n_{\text{s}}}\times \Tilde{\mathcal{G}}^{\text{d}}$ with $\Tilde{\mathcal{G}}^{\text{d}}=\{x^{\text{d}}\in\mathbb{R}^{n_{\text{d}}}\ | \ h_i^{\text{d}}x^{\text{d}}+\|h_i^{\text{d}}\|(\eta+r)\leq b \ \forall i\in[M_h]\}$. Let $\Tilde{\mathcal{G}}_J^{\text{d}}$ denote a constraint admissible control invariant set for the system 
\begin{align}
    \label{eq_discrete_time_lti_system_deterministic_subsystem_robust}
    x^{\text{d}}_{k,j+1}=A_4x^{\text{d}}_{k,j}+B_2u_{k,j}+d_{k,j}^{\text{d}}    
\end{align}
subject to constraints $x_{k,j}\in\Tilde{\mathcal{G}}^{\text{d}}$ and $u_{k,j}\in\mathcal{U}$ for any $\|d^{\text{d}}_{k,j}\|_{\infty}\leq r+ \frac{1}{2}\zeta$. We denote the extension to the space $\mathcal{X}$ as $\Tilde{\mathcal{G}}_J=\mathbb{R}^{n_{\text{s}}}\times \Tilde{\mathcal{G}}_J^{\text{d}}$. Then, if $x(0)\in\Tilde{\mathcal{G}}_J\ominus\mathcal{H}$, our next result ensures that $\Tilde{\mathcal{G}}$ and $ \Tilde{\mathcal{G}}_J$ yield continuous time constraint satisfaction of $\mathcal{G}$ and recursive feasibility for \eqref{eq_smpc_inner_layer_robust}.
\begin{proposition}[Proof in Appendix \ref{proof_prop_RMPC_guarantees}]\label{prop_RMPC_guarantees}
    Let $x(0)\in\Tilde{\mathcal{G}}_J\ominus\mathcal{H}$. Then, \eqref{eq_smpc_inner_layer_robust} is feasible for all $j\in[J]$ and $k\in[N]$. Further, ${x}(t)\in{\mathcal{G}}$, $u(t)\in\mathcal{U}$ for all $t\in [0, T]$, and ${x}((k+1)\Delta_t)\in{\mathcal{G}}_J\ominus\mathcal{H}$ for all $k\in [N]$. Lastly, given any fixed noise realization $w(\cdot)$, $a_k\in\mathcal{A}$, $k\in[N]$ and $i\in[M_x]$, \eqref{eq_smpc_inner_layer_robust} generates the same input sequence $u_{k,0},\dots,u_{k,J-1}$ for all $x_k\in\mathcal{H}_i$.
\end{proposition}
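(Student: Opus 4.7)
The approach mirrors the proof of Proposition \ref{prop_recursive_feasibility_of_MPC} but augmented by a bookkeeping of two parallel trajectories: the actual trajectory $x_{k,j}$ and the virtual trajectory $c'_{k,j}$ driven from $c_i$ by the same noise and input sequence. By linearity, $x_{k,j} - c'_{k,j} = A^j(x_{k,0} - c_i)$, and the initial-state constraint \eqref{eq_smpc_inner_layer_initial_state_constraint_robust} precisely sets $z_j = c'_{k,j}$. Hence the MPC implicitly plans along the virtual trajectory, which is the central structural observation driving all four claims.

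First I would prove the same-input-sequence property by induction on $j$. At $j=0$, \eqref{eq_smpc_inner_layer_initial_state_constraint_robust} yields $z_0 = c_i$ regardless of $x_{k,0}\in\mathcal{H}_i$, so the entire MPC optimization is identical across the cell and (modulo a deterministic tie-breaking rule) returns the same $u_{k,0}$. Assuming inputs $u_{k,0},\ldots,u_{k,j-1}$ are shared under a fixed noise realization, linearity then gives $z_j = c'_{k,j}$ independent of $x_{k,0}$, and the MPC again produces a shared $u_{k,j}$, closing the induction.

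Next I would establish feasibility and the terminal-set invariance jointly. At $k=0$, $x(0)\in\Tilde{\mathcal{G}}_J\ominus\mathcal{H}$ implies $c_0\in\Tilde{\mathcal{G}}_J$, and the RCI controller for \eqref{eq_discrete_time_lti_system_deterministic_subsystem_robust} with $d=0$ produces a $J$-step plan satisfying $z_l^{\text{d}}\in\Tilde{\mathcal{G}}_J^{\text{d}}\subseteq\Tilde{\mathcal{G}}^{\text{d}}$ and $z_J^{\text{d}}\in\Tilde{\mathcal{G}}_J^{\text{d}}\ominus B_{r+\frac{1}{2}\zeta}^{\ell_\infty}\subseteq\Tilde{\mathcal{G}}_J^{\text{d}}\ominus\mathcal{H}^{\text{d}}\ominus B_r^{\text{d}}$; the final inclusion uses $\mathcal{H}\oplus B_r\subseteq B_{r+\frac{1}{2}\zeta}^{\ell_\infty}$ (since $\|b\|_\infty\leq\|b\|_2$) together with the Pontryagin identity $A\ominus(B\oplus C)=A\ominus B\ominus C$. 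Within a $k$-step I would shift the previous optimal plan and extend it by one RCI step; since noise does not enter the $x^{\text{d}}$-component, the shifted tail still satisfies the constraints that the MPC actually imposes. Between $k$-steps, the terminal constraint at the end of step $k$ gives $c'_{k,J}\in\Tilde{\mathcal{G}}_J\ominus\mathcal{H}\ominus B_r$, and Lemma \ref{lem_B_bounds_Epsilon} places $x_{k+1,0}-c'_{k,J}=A^J(x_{k,0}-c_i)\in B_r$; hence $x_{k+1,0}\in\Tilde{\mathcal{G}}_J\ominus\mathcal{H}$, which is simultaneously the invariance claim and, via $c_{i'}\in x_{k+1,0}\oplus\mathcal{H}\subseteq\Tilde{\mathcal{G}}_J$, the prerequisite for replaying the RCI argument at step $k+1$.

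Continuous-time constraint satisfaction follows by combining prior results: feasibility gives $c'_{k,j}=z_j\in\Tilde{\mathcal{G}}=\hat{\mathcal{G}}\ominus B_r$ (checking that the half-space definition with tightening $\|h_i^{\text{d}}\|r$ is exactly the Pontryagin difference $\hat{\mathcal{G}}^{\text{d}}\ominus B_r$), Corollary \ref{lem_if_in_hat_then_in_tilde} then yields $x_{k,j}\in\hat{\mathcal{G}}$, and Lemma \ref{lem_cont_time_constraint_satisfaction} lifts this to $x(t)\in\mathcal{G}$ on every sub-interval; the input constraint $u(t)\in\mathcal{U}$ is imposed directly by \eqref{eq_smpc_inner_layer_robust}. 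The main obstacle is reconciling the RCI formulation, stated in $\ell_\infty$-norm with perturbation radius $r+\tfrac{1}{2}\zeta$, with the heterogeneous abstraction error set $\mathcal{H}\oplus B_r$ combining an $\ell_\infty$ gridding cell and an $\ell_2$ dynamics-induced ball; once the norm inclusion $\mathcal{H}\oplus B_r\subseteq B_{r+\frac{1}{2}\zeta}^{\ell_\infty}$ is in hand, the remaining Pontryagin-difference algebra and trajectory bookkeeping are routine.
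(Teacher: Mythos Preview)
Your proposal is correct and follows essentially the same route as the paper: the virtual-trajectory bookkeeping $z_j=c'_{k,j}$ is exactly the content of the paper's algebraic derivation $z_{j|j}=z_{j|j-1}+Ew_{k,j-1}$, and the RCI-based initial feasibility, the tail argument exploiting that noise does not enter $x^{\text{d}}$, the use of Corollary~\ref{lem_if_in_hat_then_in_tilde} plus Lemma~\ref{lem_cont_time_constraint_satisfaction} for continuous-time satisfaction, and the same-input property all mirror the paper's proof. One minor remark: since \eqref{eq_smpc_inner_layer_robust} has a \emph{shrinking} horizon $l=j,\ldots,J$, the tail of the previous plan already reaches the terminal constraint and no ``extend by one RCI step'' is needed within a $k$-step.
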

The proof follows analogously to Lemma \ref{lem_cont_time_constraint_satisfaction}. In summary, over a time-step $k$ the initialization strategy in \eqref{eq_smpc_inner_layer_initial_state_constraint_robust} renders the input sequence generated by \eqref{eq_smpc_inner_layer_robust} agnostic to the specific initial state within a partition. While this leads to trajectory predictions in \eqref{eq_smpc_inner_layer_robust} that do not necessarily begin at the current state measurement $x_{k,j}$ for every $j\in[J]$, the difference is bounded through $B_r$ and robustified to via $\Tilde{\mathcal{G}}$ and $\tilde{\mathcal{G}}_J$. Fig.~\ref{fig_sets_G} provides an overview of the different sets $\mathcal{G}$, $\hat{\mathcal{G}}$, $\Tilde{\mathcal{G}}$ and the effect of the respective tightenings. 

Hereafter, by abuse of notation, we denote by $\mathcal{Q}$ and $\hat{\mathcal{Q}}$ the transition probabilities of system \eqref{eq_ct_lti_system} under \eqref{eq_smpc_inner_layer_robust} in place of \eqref{eq_smpc_inner_layer}.




\subsection{Robust Dynamic Programming based Reference Generator}
Finally, we robustify the DP algorithm to abstraction errors and lower-bound the achieved reach-avoid probability. Building up on Proposition \ref{prop_RMPC_guarantees} and Lemma \ref{lem_B_bounds_Epsilon}, under a fixed noise realization $w(\cdot)$ and \eqref{eq_smpc_inner_layer_robust} as tracking controller, any trajectories $x(t)$ and $x'(t)$ with $x(k\Delta_t)\in\mathcal{H}_i$ and $x'(k\Delta_t)=c_i$ satisfy $x(t)-x'(t)\in B_r$ for all $t\in[k\Delta_t,(k+1)\Delta_t]$. Coherently, we define robustified safe and target sets, $\Tilde{\mathcal{S}}$ and $\Tilde{\mathcal{T}}$, as the union of all partitions $\mathcal{H}_i$ that are fully contained within the tightened sets $\mathcal{S}\ominus B_r$ and $\mathcal{T}\ominus B_r$, leading to $\Tilde{\mathcal{S}} = \bigcup \{ \mathcal{H}_i \mid \mathcal{H}_i \subseteq \mathcal{S} \ominus B_r \}$, and $\Tilde{\mathcal{T}} \subseteq {\mathcal{T}}\ominus B_r$ defined analogously. Consider the transition kernel $\Tilde{\mathcal{Q}}$, which is defined equivalently to $\hat{\mathcal{Q}}$ with $\Tilde{\mathcal{S}}$ and $\Tilde{\mathcal{T}}$ in place of $\mathcal{S}$ and $\mathcal{T}$. Let $\Tilde{\ell}(x_N)=\mathds{1}_{\Tilde{\mathcal{T}}}(x_N)$ for any $x_N\in\mathcal{X}$, $\iota=\{i\in[M_x]\ | \ c_i\in\tilde{\mathcal{S}}\setminus\tilde{\mathcal{T}}\}$ be all centers contained in the tightened safe set, and $\mathcal{C}_j = \{{c}\in\mathcal{C}\ | \ {c}\in\mathcal{H}_j\oplus \mathcal{H}\oplus B_r\}$ be centers in a neighborhood of $c_j$ for any $j\in[M_x]$. For all ${c}_i\in\mathcal{C}$, $k\in[N]$, compute
\newcommand{\midsized}{%
  \fontsize{9.5pt}{11.4pt}\selectfont
}
\begin{midsized}
\begin{subequations}
\begin{align}
    \Tilde{V}_N({c}_i)\!&=\!\Tilde{\ell}_N({c}_i),\label{eq_robust_value_iteration_1} \\
    \Tilde{V}_k({c}_i)\!&=\!\max_{a\in\mathcal{A}} 
    \Tilde{\mathcal{Q}}(\Tilde{\mathcal{T}}| c_i,a)\!+\!\sum_{j\in\iota}\!\min_{c\in\mathcal{C}_j}\!\Tilde{V}_{k+1}({c})\Tilde{\mathcal{Q}}(c_j| c_i,a).\label{eq_robust_value_iteration_2}%
\end{align}%
\label{eq_robust_value_iteration}%
\end{subequations}%
\end{midsized}%
The inner minimization in \eqref{eq_robust_value_iteration_2} leads to an underapproximation of the reach-avoid probability for all $x_k\in\mathcal{H}_i$ by assigning the worst-case value $\Tilde{V}_{k+1}(c)$ within the neighborhood $\mathcal{C}_j$ to any transition from $c_i$ ending in $\mathcal{H}_j$, $j\in[M_x]$. Given $k\in[N]$ and ${c}_i\in\mathcal{C}$, we denote an optimal input in \eqref{eq_robust_value_iteration_2} as $a^{\star}$ and define as optimal policy $\pi_k({c}_i)=a^{\star}$. The policy is executed on $\mathcal{M}$ at every time-step $k\in[N]$ by applying $a_k=\pi_k({c}_i)$ when ${x}_k\in\mathcal{H}_i$. 
 
Let Assumption \ref{ass_existence_of_kernel} hold and ${x}_0\in(\Tilde{\mathcal{G}}_J\ominus\mathcal{H})\cap\mathcal{H}_0$. Consider an optimal policy $\pi$ computed from \eqref{eq_robust_value_iteration} with optimal value $\Tilde{V}_0({c}_0)$. Let $V_0({x}_0)$ denote the reach-avoid probability achieved by executing $\pi$ on $\mathcal{M}$ computed via \eqref{eq_true_reach_avoid_probabilities}. Then, our main result states that $\Tilde{V}_0({c}_0)$ lower bounds the reach-avoid probability achieved when applying the policy $\pi$ to the controller \eqref{eq_smpc_inner_layer_robust} and system \eqref{eq_ct_lti_system}.
\begin{theorem}[Proof in Appendix \ref{app_proof_thm_value_function_underapproximation}]
    \label{thm_value_function_underapproximation}
    $\Tilde{V}_0({c}_0)\!\leq\!V_0({x}_0)$.
\end{theorem}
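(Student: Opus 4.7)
The plan is to prove the theorem by backward induction on $k$, establishing the stronger statement
\[
  \tilde{V}_k(c_i) \leq V_k(x_k) \quad \text{for every } c_i \in \mathcal{C},\ x_k \in \mathcal{H}_i,\ k\in[N+1].
\]
The engine of the argument is a one-step trajectory coupling: under \eqref{eq_smpc_inner_layer_robust}, a fixed noise realization $w(\cdot)$ and the command $a_k = \pi_k(c_i)$, Proposition \ref{prop_RMPC_guarantees} guarantees that the generated input sequence is identical for the actual trajectory $x(\cdot)$ starting at any $x_k \in \mathcal{H}_i$ and for the nominal trajectory $x'(\cdot)$ starting at $c_i$. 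Combined with Lemma \ref{lem_B_bounds_Epsilon}, this yields $x(t)-x'(t) \in B_r$ uniformly for $t \in [k\Delta_t,(k+1)\Delta_t]$, and by construction $r \geq \max_{x\in\mathcal{H}}\|x\|$ also implies $\mathcal{H} \subseteq B_r$.

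For the base $k=N$, if $c_i \in \tilde{\mathcal{T}}$ then $\mathcal{H}_i \subseteq c_i \oplus \mathcal{H} \subseteq (\mathcal{T}\ominus B_r)\oplus B_r \subseteq \mathcal{T}$, so $V_N(x_N)=1=\tilde{V}_N(c_i)$; otherwise $\tilde{V}_N(c_i)=0$ and the bound is trivial. For the inductive step, fix $c_i\in\mathcal{C}$, $x_k\in\mathcal{H}_i$, $a^\star=\pi_k(c_i)$, and partition the noise space using the coupled nominal process into three disjoint events: (i) $x'$ enters $\tilde{\mathcal{T}}$ before leaving $\tilde{\mathcal{S}}$ within $[k\Delta_t,(k+1)\Delta_t]$; (ii) $x'$ remains in $\tilde{\mathcal{S}}\setminus\tilde{\mathcal{T}}$ throughout and $x'_{k+1}\in\mathcal{H}_j$ for some $c_j\in\tilde{\mathcal{S}}\setminus\tilde{\mathcal{T}}$; (iii) $x'$ leaves $\tilde{\mathcal{S}}$. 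In case (i), the tightenings $\tilde{\mathcal{T}}\subseteq \mathcal{T}\ominus B_r$ and $\tilde{\mathcal{S}} \subseteq \mathcal{S}\ominus B_r$ force $x$ to simultaneously reach $\mathcal{T}$ while staying in $\mathcal{S}$, so the absorbing convention yields $V_{k+1}(x_{k+1})=1$. In case (ii), $x_{k+1}\in\mathcal{H}_j\oplus B_r \subseteq \mathcal{S}\setminus\mathcal{T}$ lies in some partition $\mathcal{H}_{j'}$ whose center satisfies $c_{j'}\in \mathcal{H}_j\oplus B_r \oplus \mathcal{H}=\mathcal{C}_j$, so the induction hypothesis and the inner minimum give $V_{k+1}(x_{k+1})\geq \tilde{V}_{k+1}(c_{j'})\geq \min_{c\in\mathcal{C}_j}\tilde{V}_{k+1}(c)$. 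Case (iii) contributes nothing to $\tilde{V}_k(c_i)$ and a nonnegative amount to $V_k(x_k)$.

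Taking expectation over $w$ and identifying $\mathbb{P}_w(\text{case (i)})=\tilde{\mathcal{Q}}(\tilde{\mathcal{T}}\mid c_i,a^\star)$ and $\mathbb{P}_w(x'_{k+1}\in\mathcal{H}_j)=\tilde{\mathcal{Q}}(c_j\mid c_i,a^\star)$ collapses the per-realization bounds to
\[
  V_k(x_k)\geq \tilde{\mathcal{Q}}(\tilde{\mathcal{T}}\mid c_i,a^\star)+\sum_{j\in\iota}\tilde{\mathcal{Q}}(c_j\mid c_i,a^\star)\min_{c\in\mathcal{C}_j}\tilde{V}_{k+1}(c)=\tilde{V}_k(c_i),
\]
and specializing to $k=0$ with $x_0\in\mathcal{H}_0$ delivers the theorem.

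The main technical obstacle is the continuous-time case distinction: the coupling $x(t)-x'(t)\in B_r$ must hold uniformly in $t$ over the whole interval (not only at sample instants), so that the event "$x'$ hits $\tilde{\mathcal{T}}$ while remaining in $\tilde{\mathcal{S}}$" transfers cleanly to "$x$ hits $\mathcal{T}$ while remaining in $\mathcal{S}$", and so that the neighborhood $\mathcal{C}_j$ truly captures every partition $x_{k+1}$ could land in when $x'_{k+1}\in\mathcal{H}_j$. This is exactly what Lemma \ref{lem_B_bounds_Epsilon} (uniform-in-$t$ containment) and the state-agnostic input rule of \eqref{eq_smpc_inner_layer_robust} from Proposition \ref{prop_RMPC_guarantees} jointly deliver; without either ingredient, the two trajectories could receive different inputs or drift apart by more than $r$ mid-interval, invalidating the case-by-case tightening arguments.
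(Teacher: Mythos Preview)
Your proof is correct and follows essentially the same backward-induction coupling argument as the paper: the paper introduces the piecewise-constant extension $\overline{V}_k$ and partitions according to where the \emph{actual} endpoint $x_{k+1}$ lands, whereas you work directly with $\tilde{V}_k(c_i)$ and partition according to the \emph{nominal} trajectory $x'$, but the two decompositions are dual and rely on the identical ingredients (Proposition~\ref{prop_RMPC_guarantees} for identical inputs, Lemma~\ref{lem_B_bounds_Epsilon} for the uniform-in-$t$ tube). One harmless slip in your case~(ii): the inclusion $\mathcal{H}_j\oplus B_r\subseteq\mathcal{S}\setminus\mathcal{T}$ is not true in general (only $\subseteq\mathcal{S}$ follows from $\mathcal{H}_j\subseteq\tilde{\mathcal{S}}\subseteq\mathcal{S}\ominus B_r$), and if $x$ is absorbed into $\mathcal{T}$ mid-interval the endpoint coupling $x_{k+1}\in\mathcal{H}_j\oplus B_r$ can fail---but in either situation $V_{k+1}(x_{k+1})=1$ and the desired inequality $V_{k+1}(x_{k+1})\geq\min_{c\in\mathcal{C}_j}\tilde{V}_{k+1}(c)$ still holds, so your conclusion is unaffected.
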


The overall robustification to the gridding abstraction may be understood as a spatial conservatism on the safe and target set. The MPC uses a tightening by $B_r$, while the value iteration considers worst-case transitions within a range $\mathcal{C}_j$ around every partition $\mathcal{H}_j$ for any $j\in [M_x]$. Both, $B_r$ and $\mathcal{C}_j$ for any $j\in[M_x]$, become arbitrarily small as $\zeta$ approaches zero.

\subsection{Computational Remarks}
\label{sec_computational_remarks}
The DP recursion \eqref{eq_robust_value_iteration} allows for significant simplifications. For any $c\in\mathcal{C}$, $\tilde{V}_k(c)=0$ if $c\in\Tilde{\mathcal{S}}^c$ and $\tilde{V}_k(c)=1$ if $c\in\Tilde{\mathcal{T}}$. This restricts the evaluation of $\tilde{V}$ to states $c\in\mathcal{C}\cap\Tilde{\mathcal{S}}\setminus\tilde{\mathcal{T}}$, which are finitely many by compactness of ${\mathcal{S}}$ and, consequently, of $\tilde{\mathcal{S}}\subseteq\mathcal{S}\ominus B_r$. 

Most importantly, the MPC terminal set allows to reduce the state space dimensionality of the MDP $\Tilde{M}$ without losing any theoretical certificates. 
Let $\hat{\mathcal{G}}_J=\mathbb{R}^{n_{\text{s}}}\times\{ \textbf{0}\}$, where $\textbf{0}\in\mathbb{R}^{n_{\text{d}}}$ is a zero-vector, and choose $\mathcal{H}=\{x\in\hat{\mathcal{G}}_J\ | \ \|x\|_{\infty}\leq \frac{1}{2}\zeta\}$. Then, given any $k\in[N]$, $j\in[J]$, $i\in[M_x]$ and $c_i,x_{k,0}\in\mathcal{H}_i$, since the state component $x^{\text{s}}$ and noise $w(\cdot)$ do not affect the evolution of $x^{\text{d}}$ it holds that $z_j=x_{k,j}-(A^jx_{k,0}-c_i)$ yields $z_j^{\text{d}}=x_{k,j}^{\text{d}}$. Since $x^{\text{s}}$ is unconstrained in \eqref{eq_smpc_inner_layer_robust} $z_{j}\in\hat{\mathcal{G}}$ implies $x_{k,j}\in\hat{\mathcal{G}}$. Consequently, it is sufficient to choose $\Tilde{\mathcal{G}}=\hat{\mathcal{G}}$ to ensure satisfaction of $\hat{\mathcal{G}}$ through \eqref{eq_smpc_inner_layer_robust}. Further, by recursive feasibility of \eqref{eq_smpc_inner_layer_robust}, the terminal constraint $z_J\in\Tilde{\mathcal{G}}_J\ominus\mathcal{H}$ will always force $x^{\text{d}}_{k,J}$ to zero at every time-step $k\in[N+1]$, regardless of the command $a_k$.
As a consequence, $x^{\text{d}}$ will be constantly zero in the MDP and the respective state dimensions neglectable. Of course, this approach comes at a trade-off: While eliminating states from the MDP reduces the computational complexity of the DP algorithm, a more restrictive terminal set potentially degrades the performance of \eqref{eq_smpc_inner_layer_robust}. 

\section{Numerical Examples}
\label{sec_numerical_example}
We empirically validate our proposed control architecture on the linear quadcopter system \eqref{eq_linear_model_quadcopter} using the parameters and constraint set $\mathcal{G}$ (except the position constraints) described in \cite{schmid2022real}. We choose $\delta_t = \SI{0.1}{\second}$, $\Delta_t=\SI{2.5}{\second}$, $T=\SI{100}{\second}$, leading to $J=25$ and $N=40$. The quadcopter is simulated with a time-discretization of $\SI{1}{\milli\second}$, where the disturbance is sampled from $w\sim\mathcal{N}(\begin{bmatrix}
    0 & 0
\end{bmatrix}^{\top},\text{diag($5\cdot10^{-4}$,$5\cdot10^{-4}$)})$ at every simulation time-step. 

The velocity and attitude constraints of the quadcopter are not considered safety-critical; for simplicity, we do not impose continuous time satisfaction of $\mathcal{G}$ and simply choose $\tilde{\mathcal{G
}}=\mathcal{G}$. We exploit that the noise only affects the $x$- and $y$-position and choose $\begin{pmatrix}
    p_z & \dot p^{\top} & \theta^{\top} & \dot \theta^{\top} 
\end{pmatrix}^{\top}\in\mathcal{G}_J^{\text{d}}=\textbf{0}$, where $\textbf{0}\in\mathbb{R}^{10}$ is a zero vector. While this choice requires the quadcopter to attain zero velocity and attitude every $\Delta_t=\SI{2.5}{\second}$, at the respective loss of performance, it allows to reduce the state space of the MDP from twelve to only two dimensions, $p_x$ and $p_y$. Further, as discussed in Section \ref{sec_computational_remarks}, under this choice one may define $\tilde{\mathcal{G}}=\hat{\mathcal{G}}$. The hypercubes are chosen as $\mathcal{H}\subseteq \mathcal{G}_J$ with $\zeta=0.05$, quantizing $p_x$ and $p_y$ into squares of $\SI{0.1}{\meter}\times\SI{0.1}{\meter}$. Since $p_x$ and $p_y$ do not affect the state evolution $\dot x$, we obtain $\mathcal{E}(\cdot)=\mathcal{H}$. 

Instead of full $J$-step reference signals, we generate $|\mathcal{A}|=100$ commands $a=(v_0^x,v_0^y,q_x,q_y)\in\mathbb{R}^4$, where $v_0^x,v_0^y\sim\mathcal{N}(0,0.3)$, $q_x,q_y\sim U(0,1000)$. Position references are then computed by integrating the constant velocity references $v_0^x$ and $v_0^y$ for the x- and y-position, respectively, starting from the current quadcopter state $x_k$. The attitude and angular-rate references are always set to zero, resulting in the full reference trajectory $x_0^{\text{ref}}, \dots, x_J^{\text{ref}}$ provided to \eqref{eq_smpc_inner_layer_robust}. The cost matrix $Q$
of the MPC is a diagonal matrix with fixed entries, except for the first two diagonal elements being replaced by $q_x$ and $q_y$ depending on the respective command. The commands therefore affect not only the position reference of the MPC, but also its cost matrix $Q$. This allows the MDP to adjust the aggressiveness of \eqref{eq_smpc_inner_layer_robust} online.

To further reduce the computational complexity, we again exploit that the quadcopter dynamics \eqref{eq_linear_model_quadcopter} are invariant to the quadcopter position. Instead of computing $\Tilde{\mathcal{Q}}$ through exhaustive simulation of every state-action pair, we record $200$ trajectories $x^i(t)$ for every command $a\in\mathcal{A}$, $t\in[0,\Delta_t]$, $i\in[200]$, starting at the origin $x^i(0)=0$. During the value iteration \eqref{eq_robust_value_iteration}, for every $c\in\mathcal{C}$, we empirically compute the expectations using the collected trajectories $x^i(t)+c$, $i\in[200]$, for every command $a\in\mathcal{A}$.
Since $\mathcal{E}(t)=\mathcal{H}$ for all $t\geq 0$, the robustification in \eqref{eq_robust_value_iteration} is efficiently implemented by associating every transition to the adjacent partition with the worst cost $\min_{c\in\mathcal{C}_j}\tilde{V}_{k+1}(c)$. 

We first verify the versatility of our approach in scenarios with different safe and target sets $\mathcal{S}$ and $\mathcal{T}$, respectively. We then extend our method to constrained optimal control tasks.

\subsection{Evaluation in Multiple Reach-Avoid Scenarios}
\label{sec_num_scenarios}
We execute our algorithm on multiple reach-avoid scenarios depicted in Fig.~\ref{fig_num_lab} and \ref{fig_num_scenarios}. The resulting reach-avoid probabilities for each scenario are depicted in Table~\ref{tab_scenario_results}. For each scenario the table lists $\Tilde{V}_0({x_0})$, the reach-avoid probability obtained by the robust value iteration \eqref{eq_robust_value_iteration}, and $V_0({x_0})$, the reach-avoid probability achieved when applying the policy $\pi$ obtained from \eqref{eq_robust_value_iteration} to the system. The value of $V_0({x_0})$ is approximately computed through $100$ Monte-Carlo simulations.

\begin{figure}[!t]
  \centering

  \begin{minipage}[b]{0.49\linewidth} 
    \centering
    \includegraphics[width=\linewidth, trim={2.5cm 0cm 2.5cm 1cm}, clip]{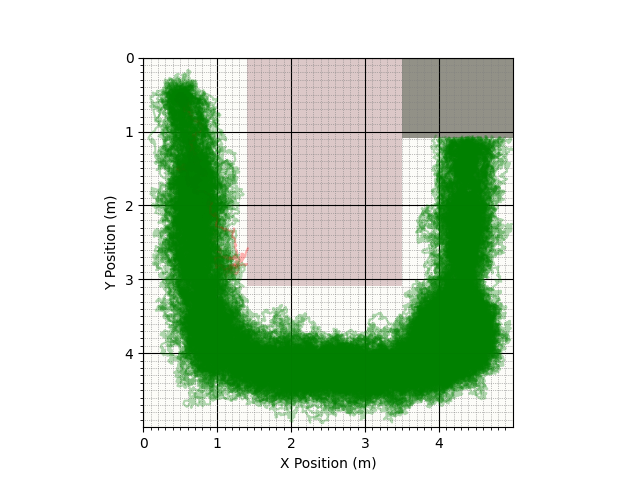}
  \end{minipage}
  \hfill
  \begin{minipage}[b]{0.49\linewidth} 
    \centering
    \includegraphics[width=\linewidth, trim={2.5cm 0cm 2.5cm 1cm}, clip]{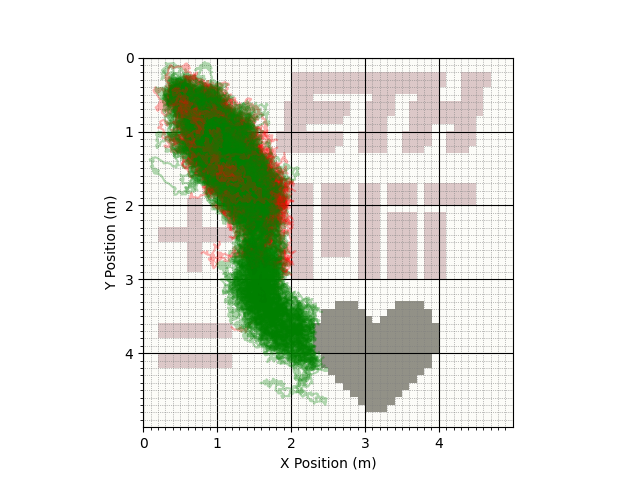}
  \end{minipage}

  \vskip\baselineskip

  \begin{minipage}[b]{0.49\linewidth} 
    \centering
    \includegraphics[width=\linewidth, trim={2.5cm 0cm 2.5cm 1cm}, clip]{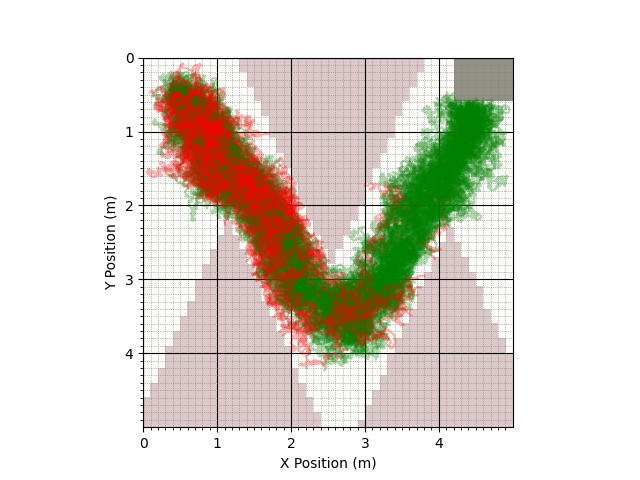}
  \end{minipage}
  \hfill
  \begin{minipage}[b]{0.49\linewidth} 
    \centering
    \includegraphics[width=\linewidth, trim={2.5cm 0cm 2.5cm 1cm}, clip]{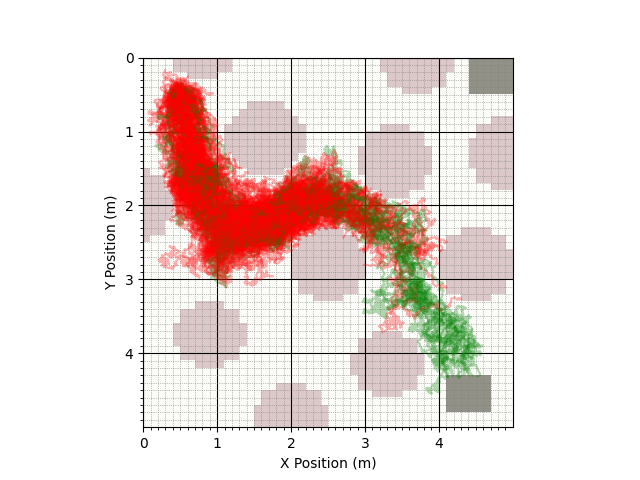}
  \end{minipage}

  \caption{X-Y-trajectories of a quadcopter under our proposed control architecture in different scenarios. The control architecture maximizes the reach-avoid probability. Top-left: simple, Top-right: eth+mit, Bottom-Left: zigzag, Bottom-Right: balls, and Fig.~\ref{fig_num_lab}: labyrinth. The unsafe set $\mathcal{S}^c$ is red, the target set $\mathcal{T}$ gray. Green trajectories satisfy the reach-avoid objective, red trajectories do not. The initial x- and y-position are always chosen as $(0.5,0.5)$.}
  \label{fig_num_scenarios}
\end{figure}

Generally, the longer the path to the target and the smaller safe passages in between unsafe sets, the lower the reach-avoid probability ($\SI{91}{\percent}$ for the \emph{simple}-environment, and $\SI{10}{\percent}$ for the \emph{balls}-environment). As indicated by Theorem \ref{thm_value_function_underapproximation}, the empirical reach-avoid probability is generally higher than the reach-avoid probability computed by \eqref{eq_robust_value_iteration}, $\Tilde{V}_0({x}_0)\leq V_0({x}_0)$. This difference may be significant when the trajectory passes through long and small paths in between unsafe sets, such as for the \emph{zigzag} environment ($\Tilde{V}_0({x}_0)=0.003<V_0({x}_0)=0.44$), and can be reduced through finer gridding. The \emph{eth+mit} scenario ($0.72=\Tilde{V}_0({x}_0)\nleq V_0({x}_0)=0.59$) and \emph{simple} scenario ($\Tilde{V}_0({x}_0)=0.995\nleq V_0({x}_0)=0.91$) are associated higher reach-avoid probabilities in \eqref{eq_robust_value_iteration} than empirically achieved on the real system, which may seem contradictory to Theorem~\ref{thm_value_function_underapproximation}. However, note that the transitions of every state-action pair are estimated using only $200$ samples and the empirical reach-avoid probability is computed using only $100$ Monte-Carlo simulations, causing statistical errors that are not addressed in this work. 

A plot of the references commanded by the policy $\pi$ in the \emph{simple} environment and the corresponding system trajectory are depicted in Fig.~\ref{fig_num_traj_simple}. Notably, the trajectory remains for $10$ time-steps in the bottom right corner. The policy, aware of the $40$ time-steps long horizon, waits for the noise to push the state in a favorable position before attempting to reach the target set via a small passage.

Generating $200$ trajectories for every action took approximately $10$ hours of computation, while running the DP recursion took approximately $72$ minutes for each scenario. All computations have been carried out on a i7-9750H CPU with 16 GB RAM.

\begin{table}[!tb]
\begin{tabularx}{\linewidth}{@{}XXX@{}}\toprule
\textbf{Scenario} & \textbf{$\Tilde{V}_0({x}_0)$} & \textbf{$V_0({x}_0)$}  \\\midrule
        Simple & 0.995 & 0.91 \\
        Eth+mit & 0.720 & 0.59 \\
        Zigzag & 0.003 & 0.44 \\
        Balls & 1.78e-06 & 0.10 \\
        Labyrinth & 0.003 & 0.40 \\\bottomrule\\
\end{tabularx}
\caption{Reach-avoid probability $\Tilde{V}_0({x}_0)$ computed using the robust value iteration \eqref{eq_robust_value_iteration} and the empirical reach-avoid probability $V_0({x}_0)$ achieved by $\pi$ in the quadcopter simulation over $100$ trials for all scenarios in Section \ref{sec_num_scenarios}.}
    \label{tab_scenario_results}
\end{table}

\begin{figure}
    \centering
    \includegraphics[width=.8\columnwidth, trim={2.5cm 0cm 2.5cm 1cm}, clip]{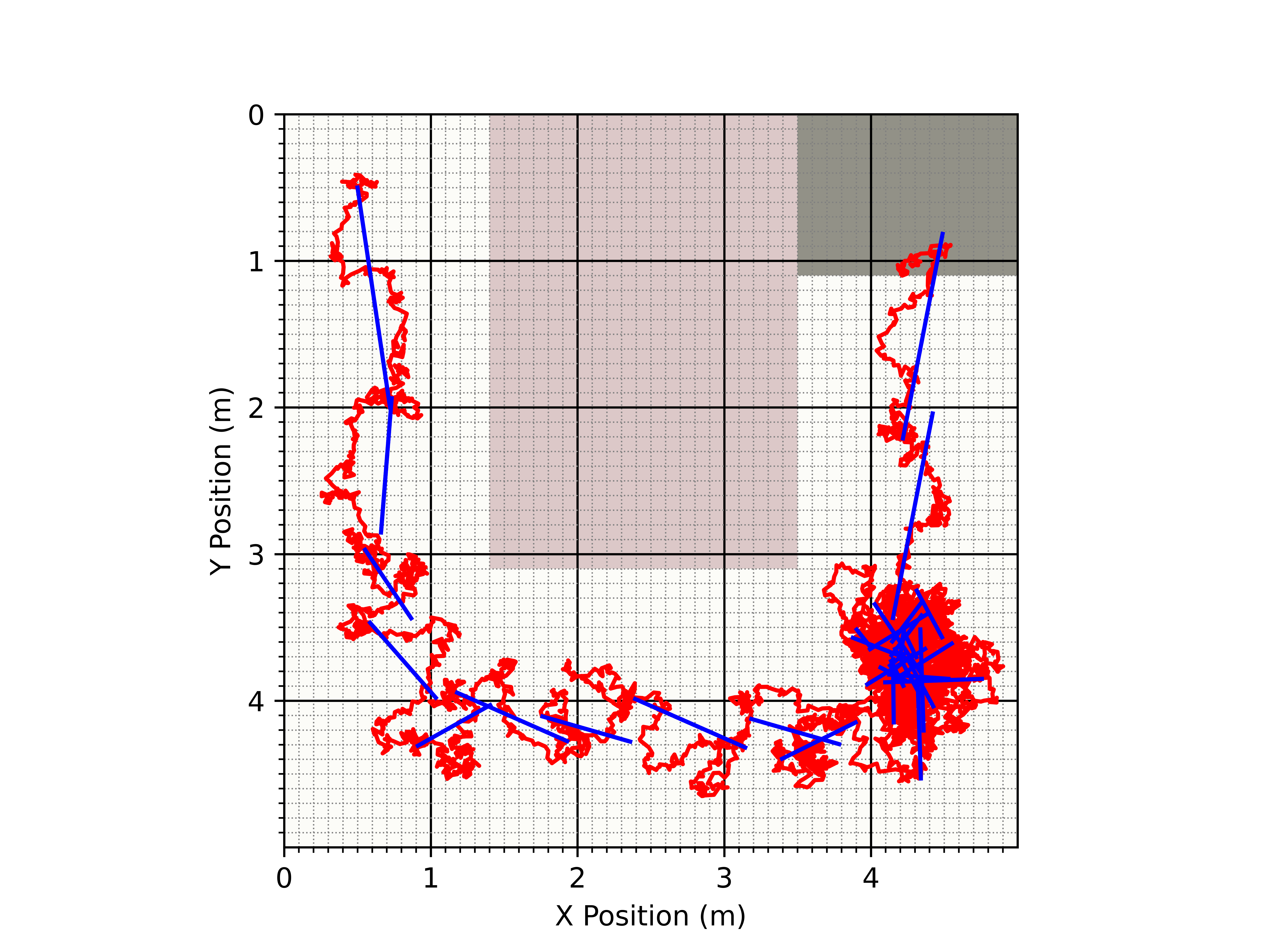}
    \caption{Commands (blue) and quadcopter trajectory (red) of a simulation run in the \emph{simple} environment.}
    \label{fig_num_traj_simple}
\end{figure}

\subsection{Constrained Optimal Control}
\label{sec_num_jcc}
Maximizing the reach-avoid probability can lead to conservative control behaviours. As an alternative, we demonstrate how our architecture can be applied to joint chance constrained optimal control problems to trade off costs against the reach-avoid probability. The goal is to find a controller $\pi^{\star}:\mathcal{X}\rightarrow\mathcal{U}$ solving
\begin{subequations}
\label{eq_num_jcc_problem}
        \label{eq_problem_formulation_jcc}%
    \begin{align}
       &&{\max}_{\pi} & \ \mathbb{E}\left[g_T(x(T)) + \int_{t=0}^{T}\hspace{-.5em}g(x(t),u(t))dt\right] \\
        && \text{s.t.} & \ \dot x(t)=A_cx(t)+B_cu(t)+E_cw(t) \\
        &&& \ \mathbb{P}(\exists t\!\in\![0,\!T]\ |\ x([0,t))\!\in\!\mathcal{G}, x(t)\!\in\!\mathcal{T})\!\geq\!\alpha \\
        &&&\ x(t)\in\mathcal{G}, \ u(t)\in\mathcal{U}, \ u(t)=\pi(x(t)) \\
        &&&\ \forall t\in[0,T].
    \end{align}
\end{subequations} 
Choosing $g(x,\cdot)\!=\!-\frac{\sqrt{(p_x-2.5)^2+(p_y-2.5)^2}}{\sqrt{2}\sqrt{2.5^2}}$, $g_T(\cdot)\!=\!0$ within $\mathcal{S}\setminus\mathcal{T}$, and a cost of zero within $\mathcal{S}^c$ and $\mathcal{T}$, we incentivize the policy to keep the trajectory close to the center $(2.5,2.5)$. We define $g_N\!=\!g_T$, $g_k({x}_k,\!a_k)\!=\!\mathbb{E}\!\left[\int_{t=k\Delta_t}^{(k+1)\Delta_t}g(x(t),\!u(t))dt\middle|x_k,\!a_k\right]$ and $\ell_k$, $\ell_N$ as in Section~\ref{sec_arch_outer}, and formulate the constrained MDP objective
\begin{subequations}%
    \begin{align}
        &\max_{\pi\in\Pi} &&\mathbb{E}\left[g_N({x}_N) + \sum_{k=0}^{N-1}g_k({x}_k,a_k)\right] \\
        & \text{s.t.} && \mathbb{E}[\ell_N({x}_N)]\geq \alpha.
    \end{align}%
    \label{eq_constrained_MDP}%
\end{subequations}%
Given the existence of a strictly feasible policy for which $\mathbb{P}(\exists t\!\in\![0,\!T]\ |\ x([0,t))\!\in\!\mathcal{G}, x(t)\!\in\!\mathcal{T})\!>\!\alpha$, \eqref{eq_constrained_MDP} is optimally solved through its Lagrange dual \cite{schmid2023Jccoc}
{\fontsize{9.5pt}{11.4pt}\selectfont
\begin{align}
\label{eq_num_jcc_lagrange}
\min_{\kappa\geq 0}\max_{\pi\in\Pi} \mathbb{E}\!\Big[g_N({x}_N)\!+\!\kappa(\ell_N({x}_N)\!-\!\alpha)\!+\!\textstyle\sum_{k=0}^{N\!-\!1}g_k({x}_k,a_k) \Big]\!.
\end{align}
}
The parameter $\kappa$ balances the expected costs against the reach-avoid probability. Instead of solving for a maximizing $\kappa$, we simply probe a range of $\kappa\in [4,6,8,10]$ and solve the inner maximization using DP, resulting in the Pareto front in Fig.~\ref{fig_num_jcc_pareto_front}. The trajectories generated under the policy associated to $\kappa=6$ in the \emph{simple} environment are depicted in Fig.~\ref{fig_num_jcc_trajectories}. Indeed, compared to the maximum reach-avoid policy in Fig.~\ref{fig_num_scenarios}, the trajectories remain closer to the center and a higher objective of $-1367$ is achieved compared to $-3509$ by the maximum reach-avoid policy. Yet, this comes at an increased empirical risk of failing the reach-avoid specification of $V_0({x}_0)=0.86$ ($\Tilde{V}_0({x}_0)=0.70$) compared to $V_0({x}_0)=0.91$ ($\Tilde{V}_0({x}_0)=0.995$). While Theorem \ref{thm_value_function_underapproximation} may not apply to Problem \eqref{eq_constrained_MDP}, robustification schemes for constrained interval MDPs are available in the literature \cite{hahn2019interval}. 

\begin{figure}[!tb]
    \centering
    \includegraphics[width = \columnwidth, trim={.5cm .25cm 0.75cm 0cm}, clip]{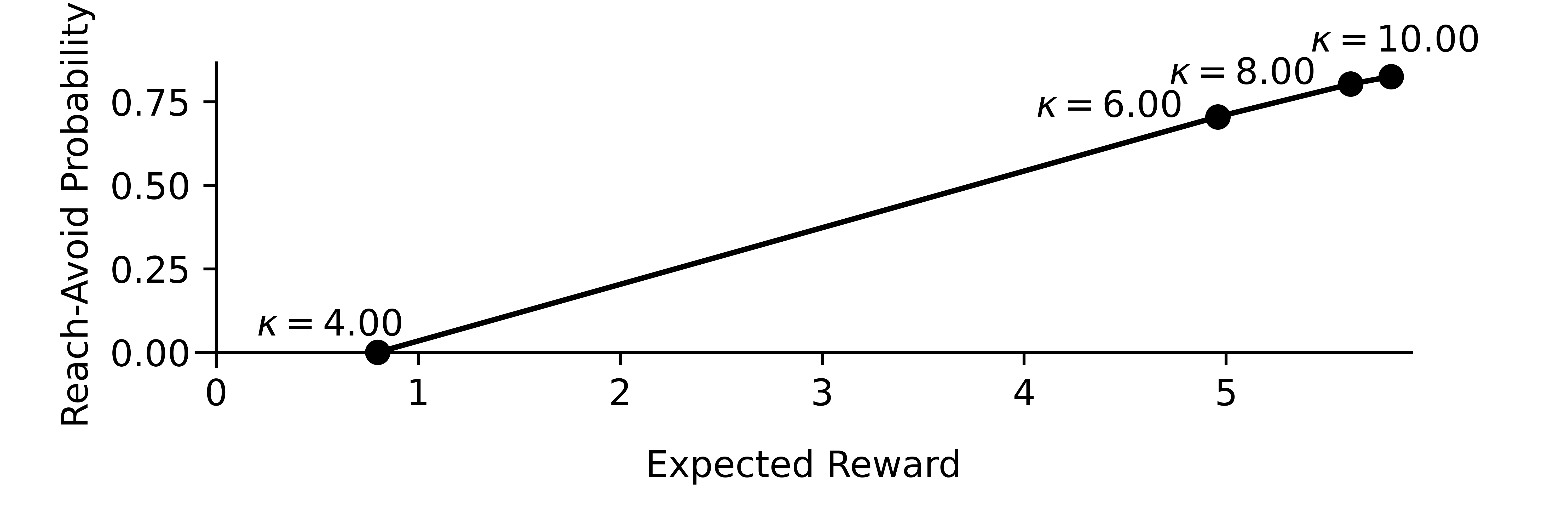}
    \caption{Pareto front of the joint chance constrained optimal control problem \eqref{eq_problem_formulation_jcc}, approached via its Lagrange dual \eqref{eq_num_jcc_lagrange} for different $\kappa\in[4,6,8,10]$.}
    \label{fig_num_jcc_pareto_front}
\end{figure}

\begin{figure}[tbh!]
    \centering
    \includegraphics[width=.8\columnwidth, trim={2.5cm 0cm 2.5cm 1cm}, clip]{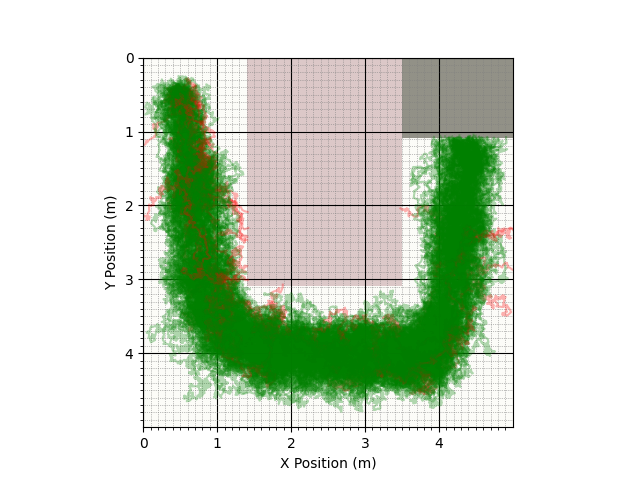}
    \caption{X-Y-trajectories generated by our control architecture for Problem \eqref{eq_num_jcc_lagrange} with $\kappa=6$.}
    \label{fig_num_jcc_trajectories}
\end{figure}

\section{Conclusion}
\label{sec_conclusion}
We analyzed control architectures, consisting of a Model Predictive Controller and Dynamic Programming based reference generator, to maximize reach-avoid probabilities for linear stochastic systems. We derived theoretical guarantees on the achieved reach-avoid probability and numerically validated the approach on a simulated quadcopter. 

Future work may include methodological aspects, such as extending the framework to nonlinear stochastic systems, as well as theoretical discussions, such as conditions ensuring Assumption \ref{ass_existence_of_kernel}.

\bibliographystyle{ieeetr}        
\bibliography{main}    

@article{pitchford2007uncertainty,
  title={Uncertainty and sustainability in fisheries and the benefit of marine protected areas},
  author={Pitchford, Jonathan W and Codling, Edward A and Psarra, Despina},
  journal={Ecol. Model.},
  volume={207},
  number={2-4},
  year={2007},
  publisher={Elsevier}
}

@inproceedings{schmid2022real,
  title={{A real-time GP based MPC for quadcopters with unknown disturbances}},
  author={Schmid, Niklas and Gruner, Jonas and Abbas, Hossam S and Rostalski, Philipp},
  booktitle={ACC},
  year={2022},
  organization={IEEE}
}

@inproceedings{bansal2021deepreach,
  title={Deepreach: A deep learning approach to high-dimensional reachability},
  author={Bansal, Somil and Tomlin, Claire J},
  booktitle={ICRA},
  pages={1817--1824},
  year={2021},
  organization={IEEE}
}

@article{bavdekar,
  title={Stochastic nonlinear model predictive control with joint chance constraints},
  author={Bavdekar, Vinay A and Mesbah, Ali},
  journal={IFAC-PapersOnLine},
  volume={49},
  number={18},
  year={2016},
  publisher={Elsevier}
}

@article{cosner2023robust,
  title={Robust safety under stochastic uncertainty with discrete-time control barrier functions},
  author={Cosner, Ryan K and Culbertson, Preston and Taylor, Andrew J and Ames, Aaron D},
  journal={arXiv preprint arXiv:2302.07469},
  year={2023}
}

@article{lindemann2023safe,
  title={Safe planning in dynamic environments using conformal prediction},
  author={Lindemann, Lars and Cleaveland, Matthew and Shim, Gihyun and Pappas, George J},
  journal={RA-L},
  volume={8},
  number={8},
  pages={5116--5123},
  year={2023},
  publisher={IEEE}
}

@article{schildbach2014scenario,
  title={The scenario approach for stochastic model predictive control with bounds on closed-loop constraint violations},
  author={Schildbach, Georg and Fagiano, Lorenzo and Frei, Christoph and Morari, Manfred},
  journal={Automatica},
  volume={50},
  number={12},
  pages={3009--3018},
  year={2014},
  publisher={Elsevier}
}

@article{thorpe2019model,
  title={Model-free stochastic reachability using kernel distribution embeddings},
  author={Thorpe, Adam J and Oishi, Meeko MK},
  journal={L-CSS},
  volume={4},
  number={2},
  pages={512--517},
  year={2019},
  publisher={IEEE}
}

@article{miller2024unsafe,
  title={Unsafe probabilities and risk contours for stochastic processes using convex optimization},
  author={Miller, Jared and Tacchi, Matteo and Henrion, Didier and Sznaier, Mario},
  journal={arXiv preprint arXiv:2401.00815},
  year={2024}
}

@article{abate2010approximate,
  title={Approximate model checking of stochastic hybrid systems},
  author={Abate, Alessandro and Katoen, Joost-Pieter and Lygeros, John and Prandini, Maria},
  journal={Eur. J. Control},
  volume={16},
  number={6},
  pages={624--641},
  year={2010},
  publisher={Elsevier}
}

@article{abate2008probabilistic,
  title={Probabilistic reachability and safety for controlled discrete time stochastic hybrid systems},
  author={Abate, Alessandro and Prandini, Maria and Lygeros, John and Sastry, Shankar},
  journal={Automatica},
  volume={44},
  number={11},
  year={2008},
  publisher={Elsevier}
}

@article{schmid2022probabilistic,
  title={Probabilistic reachability and invariance computation of stochastic systems using linear programming},
  author={Schmid, Niklas and Lygeros, John},
  journal={IFAC-PapersOnLine},
  volume={56},
  number={2},
  year={2023},
  publisher={Elsevier}
}

@article{schmid2023Jccoc,
  title={Computing Optimal Joint Chance Constrained Control Policies},
  author={Schmid, Niklas and Fochesato, Marta and Li, Sarah H.~Q. and Sutter, Tobias and Lygeros, John},
  journal={arXiv preprint arXiv:2312.10495},
  year={2023}
}

@article{hahn2019interval,
  title={Interval {Markov} decision processes with multiple objectives: from robust strategies to Pareto curves},
  author={Hahn, Ernst Moritz and Hashemi, Vahid and Hermanns, Holger and Lahijanian, Morteza and Turrini, Andrea},
  journal={TOMACS},
  volume={29},
  number={4},
  year={2019},
  publisher={ACM New York, NY, USA}
}

@inproceedings{prandini2008application,
  title={Application of reachability analysis for stochastic hybrid systems to aircraft conflict prediction},
  author={Prandini, Maria and Hu, Jianghai},
  booktitle={CDC},
  pages={4036--4041},
  year={2008},
  organization={IEEE}
}

@book{borrelli2017predictive,
  title={Predictive control for linear and hybrid systems},
  author={Borrelli, Francesco and Bemporad, Alberto and Morari, Manfred},
  year={2017},
  publisher={Cambridge University Press}
}

@article{ames2016control,
  title={Control barrier function based quadratic programs for safety critical systems},
  author={Ames, Aaron D and Xu, Xiangru and Grizzle, Jessy W and Tabuada, Paulo},
  journal={TAC},
  volume={62},
  number={8},
  pages={3861--3876},
  year={2016},
  publisher={IEEE}
}

@article{karaman2011sampling,
  title={Sampling-based algorithms for optimal motion planning},
  author={Karaman, Sertac and Frazzoli, Emilio},
  journal={IJRR},
  volume={30},
  number={7},
  pages={846--894},
  year={2011},
  publisher={Sage Publications Sage UK: London, England}
}

@article{matni2024towards,
  title={Towards a theory of control architecture: A quantitative framework for layered multi-rate control},
  author={Matni, Nikolai and Ames, Aaron D and Doyle, John C},
  journal={arXiv preprint arXiv:2401.15185},
  year={2024}
}

@article{akella2024risk,
  title={Risk-aware robotics: Tail risk measures in planning, control, and verification},
  author={Akella, Prithvi and Dixit, Anushri and Ahmadi, Mohamadreza and Lindemann, Lars and Chapman, Margaret P and Pappas, George J and Ames, Aaron D and Burdick, Joel W},
  journal={arXiv preprint arXiv:2403.18972},
  year={2024}
}

@article{lygeros1999controllers,
  title={Controllers for reachability specifications for hybrid systems},
  author={Lygeros, John and Tomlin, Claire and Sastry, Shankar},
  journal={Automatica},
  volume={35},
  number={3},
  pages={349--370},
  year={1999},
  publisher={Elsevier}
}

@article{garcia2015comprehensive,
  title={A comprehensive survey on safe reinforcement learning},
  author={Garc{\i}a, Javier and Fern{\'a}ndez, Fernando},
  journal={JMLR},
  volume={16},
  number={1},
  pages={1437--1480},
  year={2015}
}

@article{stamouli2025layered,
  title={Layered Multirate Control of Constrained Linear Systems},
  author={Stamouli, Charis and Tsiamis, Anastasios and Morari, Manfred and Pappas, George J},
  journal={arXiv preprint arXiv:2504.10461},
  year={2025}
}

@article{dong2023review,
  title={A review of mobile robot motion planning methods: from classical motion planning workflows to reinforcement learning-based architectures},
  author={Dong, Lu and He, Zichen and Song, Chunwei and Sun, Changyin},
  journal={J Syst Eng Electron},
  volume={34},
  number={2},
  pages={439--459},
  year={2023},
  publisher={BIAI}
}

@inproceedings{fan2020fast,
  title={Fast and guaranteed safe controller synthesis for nonlinear vehicle models},
  author={Fan, Chuchu and Miller, Kristina and Mitra, Sayan},
  booktitle={CAV},
  pages={629--652},
  year={2020},
  organization={Springer}
}

@article{feher2020hierarchical,
  title={Hierarchical evasive path planning using reinforcement learning and model predictive control},
  author={Feh{\'e}r, {\'A}rp{\'a}d and Aradi, Szil{\'a}rd and B{\'e}csi, Tam{\'a}s},
  journal={IEEE Access},
  volume={8},
  pages={187470--187482},
  year={2020},
  publisher={IEEE}
}

@article{stulp2012reinforcement,
  title={Reinforcement learning with sequences of motion primitives for robust manipulation},
  author={Stulp, Freek and Theodorou, Evangelos A and Schaal, Stefan},
  journal={T-RO},
  volume={28},
  number={6},
  pages={1360--1370},
  year={2012},
  publisher={IEEE}
}

@inproceedings{ono_jcc_mpc,
  title={Joint chance-constrained model predictive control with probabilistic resolvability},
  author={Ono, Masahiro},
  booktitle={ACC},
  year={2012}
}

@article{benders2025embedded,
  title={Embedded hierarchical MPC for autonomous navigation},
  author={Benders, Dennis and K{\"o}hler, Johannes and Niesten, Thijs and Babu{\v{s}}ka, Robert and Alonso-Mora, Javier and Ferranti, Laura},
  journal={T-RO},
  year={2025},
  publisher={IEEE}
}

@book{rawlings2020model,
  title={Model predictive control: theory, computation, and design},
  author={Rawlings, James Blake and Mayne, David Q and Diehl, Moritz and others},
  volume={2},
  year={2020},
  publisher={Nob Hill Publishing Madison, WI}
}

\begin{appendices}

\section{Proof of Lemma \ref{lem_cont_time_constraint_satisfaction}}
\label{app_proof_lem_cont_time_constraint_satisfaction}
We first establish the following result on the growth of the state over an interval $\delta_t$.
\begin{lemma}                       
    Consider the system $\dot z(t)=Az(t) + d$, $d\in\mathbb{R}^n$, $t\in[0,\delta_t]$, let
    $\psi(z(0),d)=(e^{\|\!A\!\|\delta_t}\!-\!1)\|z(0)\|\!+\!\frac{1}{\|\!A\!\|}(e^{\|\!A\!\|\delta_t}\!-\!1) \|d\|$ if $\|A\|\!>\!0$ and $\psi(z(0),d) = \delta_t \|d\|$ otherwise. Then, if $hz(0)+\|h\|\psi(z(0),d)\leq b$, it follows that $hz(t)\leq b$ for all $t\in[0,\delta_t]$.
    \label{lem_ct_boundedness}
\end{lemma}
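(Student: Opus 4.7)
The strategy is to bound the deviation $\|z(t)-z(0)\|$ uniformly over $t\in[0,\delta_t]$ by $\psi(z(0),d)$, and then use the elementary inequality $hz(t) = hz(0) + h(z(t)-z(0)) \le hz(0) + \|h\|\|z(t)-z(0)\|$ (Cauchy--Schwarz on a scalar half-space constraint) to transfer the hypothesis $hz(0)+\|h\|\psi(z(0),d)\le b$ into the desired $hz(t)\le b$.

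First I would write the solution via the variation of constants formula,
\begin{align*}
z(t) = e^{At}z(0) + \int_0^t e^{A(t-s)} d\,\mathrm{d}s,
\end{align*}
so that
\begin{align*}
z(t)-z(0) = (e^{At}-I)z(0) + \int_0^t e^{A(t-s)} d\,\mathrm{d}s.
\end{align*}
For the case $\|A\|>0$, I would apply the submultiplicative bound $\|e^{At}-I\|\le e^{\|A\|t}-1$ (which follows by comparing the power series of $e^{At}-I$ term by term to $e^{\|A\|t}-1$) together with $\|\int_0^t e^{A(t-s)}\,\mathrm{d}s\|\le\int_0^t e^{\|A\|(t-s)}\,\mathrm{d}s=\tfrac{1}{\|A\|}(e^{\|A\|t}-1)$. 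Combining the two via the triangle inequality yields
\begin{align*}
\|z(t)-z(0)\|\le (e^{\|A\|t}-1)\|z(0)\| + \tfrac{1}{\|A\|}(e^{\|A\|t}-1)\|d\|.
\end{align*}
Since both coefficients are nondecreasing in $t$, the right-hand side is maximized at $t=\delta_t$, giving the bound $\psi(z(0),d)$ on the entire interval $[0,\delta_t]$.

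For the degenerate case $\|A\|=0$, the dynamics reduce to $\dot z=d$, so $z(t)-z(0)=td$ and $\|z(t)-z(0)\|\le \delta_t\|d\|=\psi(z(0),d)$ directly. In either case, plugging the uniform bound back into the Cauchy--Schwarz step yields $hz(t)\le hz(0)+\|h\|\psi(z(0),d)\le b$ for all $t\in[0,\delta_t]$, completing the argument. There is no real obstacle here; the only subtle point worth double-checking is the monotonicity of the coefficient $\tfrac{1}{\|A\|}(e^{\|A\|t}-1)$ as $\|A\|\to 0$, which is resolved by L'Hôpital (it converges to $t$), matching the degenerate case and showing $\psi$ is continuous in $\|A\|$.
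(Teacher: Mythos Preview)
Your proposal is correct and follows essentially the same route as the paper: write the solution via the matrix exponential, bound $\|e^{At}-I\|\le e^{\|A\|t}-1$ through the power series, bound the integral term by $\tfrac{1}{\|A\|}(e^{\|A\|t}-1)\|d\|$ (or $t\|d\|$ when $\|A\|=0$), maximize at $t=\delta_t$, and then apply Cauchy--Schwarz to the half-space constraint. The only cosmetic difference is that the paper writes the convolution as $\int_0^t e^{A\tau}\,d\tau\cdot d$ rather than $\int_0^t e^{A(t-s)}d\,\mathrm{d}s$, which is the same thing after a change of variable.
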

\begin{proof}
        Note that $\|e^{At}-I\| = \|\sum_{k=1}^{\infty}\frac{(At)^k}{k!}\|\leq\sum_{k=1}^{\infty}\frac{\|A\|^kt^k}{k!}=e^{\|A\|t}-1$. For any $t\in[0,\delta_t]$ the deviation of the state $z(t)$ from $z(0)$ is bounded by
    \begin{align*}
        &\max_{t\in[0,\delta_t]}\|z(t)-z(0)\|\\
        &= \max_{t\in[0,\delta_t]}\|e^{At}z(0) + \int_{\tau=0}^{t}e^{A\tau}d\tau d - z(0)\|
        \\
        &\leq \max_{t\in[0,\delta_t]}(e^{\|A\|t}\!-\!1)\|z(0)\|\!+\!\int_{\tau=0}^{t}e^{\|A\|\tau}d\tau \|d\| 
        \\&=\begin{cases}
            \begin{aligned}[b]
            \max_{t\in[0,\delta_t]}&(e^{\|A\|t}-1)\|z(0)\| \\
            &+\!\frac{1}{\|A\|}(e^{\|A\|t}\!-\!1)\|d\|
            \end{aligned}
            &\text{if } \|A\|\!>\!0 \\
            \begin{aligned}[b]
            \max_{t\in[0,\delta_t]}& t \|d\|
            \end{aligned}
            &\text{otherwise.}
        \end{cases}
        \\&
        =\begin{cases}
            \begin{aligned}[b]
            &(e^{\|A\|\delta_t}-1)\|z(0)\| \\
            &\quad\!+\!\frac{1}{\|A\|}(e^{\|A\|\delta_t}\!-\!1)\|d\|
            \end{aligned}
            &\text{if } \|A\|\!>\!0 \\
            \begin{aligned}[b]
            &\delta_t \|d\|
            \end{aligned}
            &\text{otherwise.}
        \end{cases}\\
        &\eqqcolon \psi(z(0),d).
    \end{align*}
    Consequently, assuming that $hz(0)+\|h\|\psi(z(0),d) \leq b$, we have
    \begin{align*}
    	hz(t) &\leq hz(0) + \max_{t\in[0,\delta_t]}h(z(t)-z(0)) 
    	\\&\leq hz(0) + \|h\|\max_{t\in[0,\delta_t]}\|z(t)-z(0)\| 
    	\\&\leq  hz(0)+\|h\|\psi(z(0),d)  
    	\\&\leq b,
    \end{align*} 
    for $t\in[0,\delta_t]$.
\end{proof}

Using Lemma \ref{lem_ct_boundedness}, we now prove Lemma \ref{lem_cont_time_constraint_satisfaction}. Assuming zero-order hold of inputs, during a time-step $t\in[j\delta_t,(j+1)\delta_t)$, the deterministic state component of \eqref{eq_ct_lti_system} evolves according to $\dot x^{\text{d}}(t) = A_4x^{\text{d}}(t)+B_2u$. Let $\mathcal{G}^{\text{d}}=\{x^{\text{d}}\in\mathbb{R}^{n_{\text{d}}}\ | \ h_i^{\text{d}}x^{\text{d}}\leq b \ \forall i\in[M_h]\}$ and 
\begin{align*}
    \eta &= \max_{x\in\mathcal{G}^{\text{d}}, u\in\mathcal{U}}\psi(x,B_2u)\\ &=\max_{x\in\mathcal{G}^{\text{d}}, u\in\mathcal{U}}\begin{cases}
            \begin{aligned}[b]
            &(e^{\|A_4\|\delta_t}-1)\|x\| \\
            &\quad\!+\!\frac{1}{\|A_4\|}(e^{\|A_4\|\delta_t}\!-\!1)\|B_2u\|
            \end{aligned}
            &\text{if } \|A_4\|\!>\!0 \\
            \begin{aligned}[b]
            &\delta_t \|B_2u\|
            \end{aligned}
            &\text{otherwise.}
        \end{cases} \\
        &=\begin{cases}
            \begin{aligned}[b]
            &(e^{\|A_4\|\delta_t}-1)\kappa_x \\
            &\quad\!+\!\frac{1}{\|A_4\|}(e^{\|A_4\|\delta_t}\!-\!1)\kappa_u
            \end{aligned}
            &\text{if } \|A_4\|\!>\!0 \\
            \begin{aligned}[b]
            &\delta_t \kappa_u
            \end{aligned}
            &\text{otherwise.}
        \end{cases} 
\end{align*}
If $h_i{x}_{k,j} + \|h_i\|\eta\leq b$ for all $i\in[M_h]$, then $h_i^{\text{d}}{x}^{\text{d}}_{k,j} \leq h_i^{\text{d}}{x}^{\text{d}}_{k,j} + \|h_i\|\eta\leq b$. Since this implies ${x}_{k,j}^{\text{d}}\in\mathcal{G}^{\text{d}}$, we can apply Lemma \ref{lem_ct_boundedness} and state that $h_i^{\text{d}}{x}^{\text{d}}(t)\leq b$ for all $t\in[k\Delta_t+j\delta_t,k\Delta_t+(j+1)\delta_t]$, $i\in[M_h]$. Then, also $h_i{x}(t)\leq b$. Consequently, if $x_{k,j}\in\Tilde{G}=\{x\in\mathcal{X}\ | \ h_i{x}_{k,j}+ \|h_i\|{\eta} \leq b \ \forall i\in[M_h]\}$, also $x(t)\in\mathcal{G}$ for all $t\in[k\Delta_t+j\delta_t,k\Delta_t+(j+1)\delta_t]$. 
\qed

\section{Proof of Proposition \ref{prop_recursive_feasibility_of_MPC}}
\label{app_proof_prop_recursive_feasibility_of_MPC}


Let $v_{j|j},\dots, v_{J-1|j}\in\mathcal{U}$, $z_{j|j},\dots, z_{J-1|j}\in\hat{\mathcal{G}}$, $z_{J|j}\in\hat{\mathcal{G}}_J$ be a feasible solution for \eqref{eq_smpc_inner_layer} at time-step $j\in[J]$, $k\in[N]$, and denote $z_{l|j}^{\text{d}}=\text{proj}_{\text{d}}(z_{l|j})$ for $l=j,\dots,J$. Let, the input $u_{k,j}=v_{j|j}$ be applied to the system and recall that, by constraint \eqref{eq_smpc_inner_layer_initial_state_constraint}, any feasible solution at time-step $j+1$ must yield $z_{j+1|j+1}^{\text{d}}\coloneqq A_4x_{k,j}^{\text{d}}+B_2u_{k,j}=A_4z_{j|j}^{\text{d}}+B_2u_{k,j}=z_{j+1|j}^{\text{d}}$. Then, the input sequence $v_{l|j+1}=v_{l|j}$ for $l=j+1,\dots,J-1$, yields $z_{l|j+1}^{\text{d}}=z_{l|j}^{\text{d}}$ and, by feasibility of $z_{j|j},\dots, z_{J|j}$ at time-step $j$, $h_i^{\text{d}}z_{l|j+1}^{\text{d}} = h_i^{\text{d}}z_{l|j}^{\text{d}} \leq b -\|h_i\|\eta$ for all $i\in[M_h]$. This implies that for all $l=j+1,\dots,J-1$ the input sequence $v_{j+1|j+1},\dots,v_{J|j+1}$ yields  $h_iz_{l|j+1}=\begin{bmatrix}
    \textbf{0} & h_i^{\text{d}}
\end{bmatrix}\begin{bmatrix}
    z_{l|j+1}^{\text{s},\top} & z_{l|j+1}^{\text{d},\top} 
\end{bmatrix}^{\top}\leq b-\|h_i\|\eta$ for all $i\in[M_h]$ and, consequently, $z_{l|j+1}\in\hat{\mathcal{G}}$. By the same arguments $z_{J|j+1}\in\hat{\mathcal{G}}_J$. 

Starting from $x_{k,0}\in\hat{\mathcal{G}}_J$ for some $k\in[N]$, by the definition of a constraint admissible control invariant set, a feasible solution for \eqref{eq_smpc_inner_layer} exists. Then, following the above discussion, \eqref{eq_smpc_inner_layer} is feasible for all $j\in[J]$, and $x_{k,J}\in\hat{\mathcal{G}}_J$. Applying this argument inductively across the inner horizon $j \in [J]$ and outer horizon $k \in [N]$, starting from $x(0)=x_{0,0}\in\hat{\mathcal{G}}_J$, \eqref{eq_smpc_inner_layer} remains feasible for all $j\in[J]$, $k\in[N]$ since $x_{k,J}\in\hat{\mathcal{G}}_J$ for all $k\in[N]$. 

Finally, by Lemma \ref{lem_cont_time_constraint_satisfaction}, if $x_{k,j}\in\hat{\mathcal{G}}$ is satisfied in discrete time for all $k\in[N]$ and $j\in[J]$, $x(t)\in{\mathcal{G}}$ is satisfied in continuous time for all $t\in[0,T]$. Further, $x_k\in\hat{\mathcal{G}}_J$ for all $k\in[N+1]$. \qed

\section{Proof of Lemma \ref{lem_B_bounds_Epsilon}.}
\label{proof_lem_B_bounds_Epsilon}
Consider the trajectories $x(t)$ and $x'(t)$ of system \eqref{eq_ct_lti_system} originating from $x(0)\in\mathcal{H}_i$ and $x'(0)=c_i$, respectively. Assume both trajectories are driven by the same control input $u(\cdot)$ and noise realization $w(\cdot)$. We define the error as $\xi(t)=x(t)-x'(t)$ for all $t\geq 0$. Since the input and noise terms are identical, they cancel in the difference. This yields the autonomous error system $\xi(t)= e^{A_ct}\xi(0)$ with $\xi(0)\in\mathcal{H}$. Then, the maximum error over the interval $[0,\Delta_t]$ is bounded by
\begin{subequations}
        \begin{align}
            \max_{t\in[0,\Delta_t]}\|\xi(t)\|&=\max_{t\in[0,\Delta_t]}\|e^{A_c t}{\xi}(0)\|\\
            & \leq e^{\|A_c\|\Delta_t}\max_{\xi\in\mathcal{H}}\|\xi\|. 
        \end{align}
\end{subequations} 
Consequently, $\bigcup_{t\in[0,\Delta_t]}\{e^{A_ct}\xi\ | \ \xi\in\mathcal{H}\}\subseteq B_r$ with $r=e^{\|A_c\|\Delta_t}\max_{{\xi}\in\mathcal{H}}\|{\xi}\|$. 
\qed

\section{Proof of Proposition \ref{prop_RMPC_guarantees}}
\label{proof_prop_RMPC_guarantees}
We first establish that the optimization problem \eqref{eq_smpc_inner_layer_robust} is feasible at the initial step $j=0$ for any $x_{k,0} \in \Tilde{\mathcal{G}}_{J} \ominus \mathcal{H}$. The initialization constraint \eqref{eq_smpc_inner_layer_initial_state_constraint_robust} sets $z_0=c_i$, which implies $z_0-x_{k,0}\in\mathcal{H}$. Hence, $z_{0}\in\Tilde{\mathcal{G}}_{J}$, and $z_{0}^{\text{d}}=\text{proj}_{\text{d}}(z_0)\in\Tilde{\mathcal{G}}_J^{\text{d}}$. Note that $\Tilde{\mathcal{G}}_{J}=\mathbb{R}^{n_{\text{s}}}\times\Tilde{\mathcal{G}}_{J}^{\text{d}}$, where $\Tilde{\mathcal{G}}_{J}^{\text{d}}$ is a robust control invariant constraint admissible set for the system 
    $z_{l+1}^{\text{d}} = A_4z_{l}^{\text{d}} + B_2v_{l} + d_{l}^{\text{d}}$ for all $\|d_{l}^{\text{d}}\|_{\infty}\leq r+ \frac{1}{2}\zeta$, where the system is subject to constraints $z_{l}^{\text{d}}\in\tilde{\mathcal{G}}$ and $v_{l}\in\mathcal{U}$ for all $l\in\mathbb{N}$. Consequently, there exists an input sequence $v_{0},\dots, v_{J-1}\in\mathcal{U}$ such that for all $j\in[J]$
\begin{align}%
    &A_4z^{\text{d}}_{j}+B_2v_{j}+d_{j}^{\text{d}}\in\Tilde{\mathcal{G}}^{\text{d}}_J  \ \ \ \qquad \forall \|d_{j}^{\text{d}}\|_{\infty}\leq r+\frac{1}{2}\zeta
    \\ \Leftrightarrow    &Az_{j}+Bv_{j}+d_{j}\in\Tilde{\mathcal{G}}_J\subseteq\Tilde{\mathcal{G}}  \qquad \forall d_{j}\in\mathcal{H}\oplus B_r
    \\ \Leftrightarrow
    &Az_{j}+Bv_{j}\in\Tilde{\mathcal{G}}_J\ominus\mathcal{H}\ominus B_r\subseteq\Tilde{\mathcal{G}}\ominus\mathcal{H}\ominus B_r,
\end{align}
which means problem \eqref{eq_smpc_inner_layer_robust} is feasible at the time-step $j=0$.

Next, we show by induction that feasibility is maintained for all $j \in \{1, \dots, J-1\}$. Consider any $k\in[N]$, $j\in\{1,\dots,J-1\}$. Let $z_{l|j-1}$, $v_{l|j-1}$ denote a feasible solution for \eqref{eq_smpc_inner_layer_robust} at time-step $j-1$ with $l=j-1,\dots,J$ and let $v_{j-1|j-1}$ be applied to the system. Then, 
\begin{subequations}
    \begin{align*}
            z_{j|j} &= x_{k,j} - A^j(c_i-x_{k,0}) \\
        &= Ax_{k,j-1} + Bv_{j-1|j-1} + Ew_{k,j-1} - A^j(c_i-x_{k,0})\\
        &= A(z_{j-1,j-1}+ A^{j-1}(c_i-x_{k,0}))&\\ &\quad + Bv_{j-1|j-1} + Ew_{k,j-1} - A^j(c_i-x_{k,0})\\
        &= Az_{j-1|j-1} + Bv_{j-1|j-1}+ Ew_{k,j-1} \\
        &= z_{j|j-1}+ Ew_{k,j-1}.
    \end{align*}
\end{subequations}
More generally, choosing $v_{l|j}=v_{l|j-1}$ for $l=j,\dots,J$, yields 
\begin{align*}
    z_{l|j} 
    &= z_{l|j-1}+A^{l-j}Ew_{k,j-1}.
\end{align*}
By design, for all $i\in[M_h]$, if
\begin{align*}
&\begin{bmatrix}
    \textbf{0} & h_i^{\text{d}}
\end{bmatrix}\begin{bmatrix}
    z_{l|j}^{\text{s},\top} & z_{l|j}^{\text{d},\top}
\end{bmatrix}^{\top} + \|h_i\|(\eta+r)\leq b,
\intertext{then}
h_iz_{l|j} =
& \begin{bmatrix}
    \textbf{0} & h_i^{\text{d}}
\end{bmatrix}\begin{bmatrix}
    z_{l|j-1}^{\text{s}}\!+\!A_1^{l-j}E_1w_{k,j-1}\\ z_{l|j-1}^{\text{d}}
\end{bmatrix}\!+\!\|h_i\|(\eta\!+\!r)\!\leq\!b,
\end{align*}
which is $z_{l|j}\in\Tilde{\mathcal{G}}$. The same argumentation leads to $z_{J|j}\in\Tilde{\mathcal{G}}_J\ominus\mathcal{H}\ominus B_r$. 
Since this holds for any $j\in\{1,\dots,J-1\}$, starting from $x_{k,0}\in\Tilde{\mathcal{G}}_J\ominus\mathcal{H}$, for which a feasible input sequence exists based on prior discussion, \eqref{eq_smpc_inner_layer_robust} remains feasible with $z_{j|j}\in\tilde{\mathcal{G}}$ for all $j\in[J]$ and $z_{J|J}\in\Tilde{\mathcal{G}}_J\ominus\mathcal{H}\ominus B_r$. By Corollary \ref{lem_if_in_hat_then_in_tilde}, this implies that $x_{k,j}\in\hat{\mathcal{G}}$ for all $j\in[J+1]$, and by definition of $B_r$ it holds that $x_{k,J}\in\hat{\mathcal{G}}_J\ominus\mathcal{H}$.

By induction, starting from $x(0)\in\Tilde{\mathcal{G}}_J\ominus\mathcal{H}$ and setting $x_{k,0}=x_{k-1,J}$, problem \eqref{eq_smpc_inner_layer_robust} is feasible for all $j\in[J]$ and $k\in[N]$.

To show that the same input is generated from any $x_k\in\mathcal{H}_i$, note that the initial state constraint \eqref{eq_smpc_inner_layer_initial_state_constraint_robust},
\begin{align*}
    &x_{k,j} -A^jx_{k,0}+A^jc_i
    \\  &\quad= A^jx_{k,0} + \sum_{l=0}^{j-1}A^{l}Bu_{k,j-l-1} \\& \qquad + \sum_{l=0}^{j-1}A^{l}Ew_{k,j-l-1} -A^jx_{k,0}+A^jc_i
    \\ &\quad =A^jc_i+\sum_{l=0}^{j-1}A^{l}(Bu_{k,j-l-1} +Ew_{k,j-l-1}),
\end{align*}
decouples the optimization from the exact realization of $x_{k,0}$ within $\mathcal{H}_i$ for all $j\in[J]$. \qed

\section{Proof of Theorem \ref{thm_value_function_underapproximation}}
\label{app_proof_thm_value_function_underapproximation}
For all $k\in[N+1]$, let $\overline{V}_k$ be the piecewise constant extension of $\Tilde{V}_k$ to ${\mathcal{X}}$ such that $\overline{V}_k({x}_k)=\Tilde{V}_k({c}_i)$ whenever ${x}_k\in\mathcal{H}_i$, $i\in[M_x]$.

By definition of $\tilde{\mathcal{S}}$ and $\tilde{\mathcal{T}}$, $\mathcal{H}_j\subseteq \Tilde{\mathcal{S}}\setminus\Tilde{\mathcal{T}}\subseteq\tilde{\mathcal{S}}$ for any $j\in \iota$. Then, for all $j\in\iota$ and $c_i\in \Tilde{\mathcal{S}}^c$, $\tilde{\mathcal{Q}}(c_j|c_i,\cdot)={\mathcal{Q}}(\mathcal{H}_j|c_i,\cdot)\leq {\mathcal{Q}}(\Tilde{\mathcal{S}}|c_i,\cdot)=0$ since $\tilde{\mathcal{S}}^c$ is absorbing (by abuse of notation, we here refer to $\mathcal{Q}$ with absorbing sets $\Tilde{\mathcal{S}}$ and $\Tilde{\mathcal{T}}$ in place of $\mathcal{S}$ and $\mathcal{T}$). Using this fact, for all $x_k\in\mathcal{H}_i\cap\Tilde{\mathcal{S}}^c$, $i\in[M_x]$ and $k\in[N]$,
\begin{align*}
    \overline{V}_k(x_k)\!&=\!\Tilde{V}_k(c_i)
    \\&=\!\Tilde{\mathcal{Q}}(\Tilde{\mathcal{T}}| c_i,\pi_k(c_i))
    \\& \quad +\sum_{j\in\iota}\min_{c\in\mathcal{C}_j}\Tilde{V}_{k+1}({c})\Tilde{\mathcal{Q}}(c_j| c_i,\pi_k(c_i))\\&=\!0,
\end{align*}
and $\overline{V}_{k}(x_k)=0\leq V_k(x_k)$. Further, by construction, $\overline{V}_{k}\in[0,1]$. Consequently, for any $x_k\in\mathcal{T}$ and $k\in[N]$ it holds that $\overline{V}_k(x_k)\leq V_k(x_k)= 1$. 

It remains to show that $\overline{V}_k(x_k)\leq V_k(x_k)$ for all $x_k\in\Tilde{\mathcal{S}}\setminus\mathcal{T}$ and $k\in[N]$.  We use the induction hypothesis that for some $k\in[N]$ the relation $V_{k+1}(x_{k+1})\geq \overline{V}_{k+1}(x_{k+1})$ holds for all $x_{k+1}\in\Tilde{\mathcal{S}}\setminus\mathcal{T}$, and show that this yields $V_{k}(x_k)\geq \overline{V}_{k}(x_k)$ for all $x_k\in\Tilde{\mathcal{S}}\setminus\mathcal{T}$. 

Consider any $x_k\in\mathcal{H}_i$ and $x_k'=c_i$ for some $i\in[M_x]$. Let the noise $w(\cdot)$ and input $u(\cdot)$ for system \eqref{eq_ct_lti_system} be fixed and $x_k$ transition to $x_{k+1}$ with absorbing sets $\mathcal{S}$ and $\mathcal{T}$, and $x'_k$ transition to $x'_{k+1}$ with absorbing sets $\Tilde{\mathcal{S}}$ and $\Tilde{\mathcal{T}}$. Note that, unless a trajectory is absorbed by any of these sets, $x_{k+1}-x'_{k+1}\in B_r$ by Lemma \ref{lem_B_bounds_Epsilon}. 
We consider three cases.
\begin{enumerate}
    \item $x_{k+1}\in\mathcal{T}$: Since $\overline{V}_{k+1}(x'_{k+1})\in[0,1]$, if $x_{k+1}\in\mathcal{T}$, then $\overline{V}_{k+1}(x'_{k+1})\leq V_{k+1}(x_{k+1})=1$;
    \item $x_{k+1}\in\mathcal{S}^c$: If $x_{k+1}\in\mathcal{S}^c$, then $x'_{k+1}\in\Tilde{\mathcal{S}}^c$ and thus $\overline{V}_{k+1}(x'_{k+1})=V_{k+1}(x_{k+1})=0$;
    \item $x_{k+1}\in\mathcal{S}\setminus\mathcal{T}$: If $x_{k+1}\in\mathcal{S}\setminus\mathcal{T}$, then $x'_{k+1}\notin \Tilde{\mathcal{T}}$, but either $x'_{k+1}\in\Tilde{\mathcal{S}}^c$, or $x'_{k+1}\in\Tilde{\mathcal{S}}\setminus\Tilde{\mathcal{T}}$ with $x'_{k+1}-x_{k+1}\in B_r$ by Lemma \ref{lem_B_bounds_Epsilon}. If $x'_{k+1}\in\Tilde{\mathcal{S}}^c$, the bounds holds trivially since $0=\overline{V}_{k+1}(x'_{k+1})\leq V_{k+1}(x_{k+1})$; in the case that $x'_{k+1}\in\Tilde{\mathcal{S}}\setminus\Tilde{\mathcal{T}}$, there exists $j\in\iota$ such that $x'_{k+1}\in\mathcal{H}_j$. Then, $\min_{\epsilon\in B_r\oplus\mathcal{H}}\overline{V}_{k+1}(c_j+\epsilon)\leq \min_{\epsilon\in B_r}\overline{V}_{k+1}(x'_{k+1}+\epsilon)\leq \overline{V}_{k+1}(x_{k+1})\leq {V}_{k+1}(x_{k+1})$, where the latter holds by the induction hypothesis for all $x_{k+1}\in\Tilde{\mathcal{S}}\setminus\mathcal{T}$ and by the prior analysis for all $x_{k+1}\in\Tilde{\mathcal{S}}^c$.
\end{enumerate}
Together, 
\begin{subequations}
    \begin{align}
    \mathds{1}_{\mathcal{T}}(&x_{k+1}) + \mathds{1}_{\mathcal{S}\setminus\mathcal{T}}(x_{k+1})V_{k+1}(x_{k+1})
    \\
    &\geq\!\mathds{1}_{\tilde{\mathcal{T}}}(x'_{k+1})\!+\!\mathds{1}_{\tilde{\mathcal{S}}\setminus\tilde{\mathcal{T}}}(x'_{k+1})\!\min_{\epsilon\in B_r}\!\overline{V}_{k+1}(x'_{k+1}\!+\!\epsilon) 
    \\
    &\geq
    \mathds{1}_{\tilde{\mathcal{T}}}(x'_{k+1})\!+\!\sum_{j\in\iota}\!\mathds{1}_{\mathcal{H}_j}(x'_{k+1})\hspace{-.5em}\min_{\epsilon\in B_r\oplus\mathcal{H}}\hspace{-.7em}\overline{V}_{k+1}(c_j\!+\!\epsilon). 
\end{align}
\label{eq_proof_E_IndicatorFcts}%
\end{subequations}%
For the last inequality, recall that by definition of $\Tilde{\mathcal{S}}$, $\Tilde{\mathcal{T}}$ and $\iota$, $\Tilde{\mathcal{S}}\setminus\Tilde{\mathcal{T}}=\bigcup_{j\in\iota}\mathcal{H}_j$. Since \eqref{eq_proof_E_IndicatorFcts} holds for any noise realization and input, it also holds in expectation. Combined with the fact that $\mathcal{Q}$ in \eqref{eq_true_reach_avoid_probabilities} models transitions with absorbing sets $\mathcal{S}$ and $\mathcal{T}$, while $\Tilde{\mathcal{Q}}$ models transitions with absorbing sets $\Tilde{\mathcal{S}}$ and $\Tilde{\mathcal{T}}$,
\begin{align*}
    \Tilde{\mathcal{Q}}(&\Tilde{\mathcal{T}}| c_i,\pi_k(c_i)) +\sum_{j\in\iota}\min_{\epsilon\in B_r\oplus\mathcal{H}}\overline{V}_{k+1}(c_j+\epsilon)\Tilde{\mathcal{Q}}(c_j| c_i,\pi_k(c_i)) 
    \\
    &=\mathbb{P}({x}'_{k+1}\in\tilde{\mathcal{T}}| c_i,\pi_k(c_i)) \\&\quad +\sum_{j\in\iota}\min_{\epsilon\in B_r\oplus\mathcal{H}}\overline{V}_{k+1}(c_j+\epsilon)\mathbb{P}({x}'_{k+1}\in\mathcal{H}_j| c_i,\pi_k(c_i)) 
    \\
    &=\mathbb{E}[\mathds{1}_{\tilde{\mathcal{T}}}(x'_{k+1}) \\&\quad + \sum_{j\in\iota}\mathds{1}_{\mathcal{H}_j}(x'_{k+1})\min_{\epsilon\in B_r\oplus\mathcal{H}}\overline{V}_{k+1}(c_j+\epsilon) | c_i,\pi_k(c_i)]
    \\
    &\leq\mathbb{E}[\mathds{1}_{\mathcal{T}}(x_{k+1})+ \mathds{1}_{\mathcal{S}\setminus\mathcal{T}}(x_{k+1})V_{k+1}(x_{k+1}) | x_k,\pi_k(x_k)]
    \\
    &= \int_{\mathcal{X}}{V}_{k+1}(x_{k+1})\mathcal{Q}(dx_{k+1}| {x}_k,\pi_k({x}_k))
    \\
    &= V_k({x}_k),
\end{align*}
where \eqref{eq_smpc_inner_layer_robust} generates the same input sequence for $\pi_k(x_k)$ and $\pi_k(c_i)$ under the same noise realization based on Proposition \eqref{prop_RMPC_guarantees}.

Now, consider $x_k\in
\mathcal{H}_i\cap\Tilde{\mathcal{S}}\setminus{\mathcal{T}}$ for some $i\in[M_x]$ and note that $\Tilde{\mathcal{S}}\setminus{\mathcal{T}}\subseteq \mathcal{S}\setminus\mathcal{T}$. Then,
\begin{subequations}
    \begin{align}
        V_k({x}_k)\!
        &\geq \Tilde{\mathcal{Q}}(\Tilde{\mathcal{T}}| c_i,\pi_k(c_i))
         \label{eq_proof_of_abstraction_bound_4}\\& \quad +\sum_{j\in\iota}\min_{\epsilon\in B_r\oplus\mathcal{H}}\overline{V}_{k+1}(c_j+\epsilon)\Tilde{\mathcal{Q}}(c_j| c_i,\pi_k(c_i)) \nonumber
        \\
        & = \Tilde{\mathcal{Q}}(\Tilde{\mathcal{T}}| c_i,\pi_k(c_i))
         \label{eq_proof_of_abstraction_bound_5} \\& \quad +\sum_{j\in\iota}\min_{c\in\mathcal{C}\cap \mathcal{H}_j\oplus B_r\oplus \mathcal{H}}\overline{V}_{k+1}({c})\Tilde{\mathcal{Q}}(c_j| c_i,\pi_k(c_i))\nonumber
         \\
         &=\Tilde{V}_k(c_i)\label{eq_proof_of_abstraction_bound_6}
         \\
         &=\overline{V}_k(c_i),\label{eq_proof_of_abstraction_bound_7}
    \end{align}%
    \label{eq_proof_of_abstraction_bound}%
\end{subequations}%
where \eqref{eq_proof_of_abstraction_bound_4} is based on the prior analysis. Relation \eqref{eq_proof_of_abstraction_bound_5} follows since any point $c_j+\epsilon\in\mathcal{H}_l$ with $\epsilon\in B_r\oplus\mathcal{H}$ yields $c_l\in \mathcal{H}_j\oplus B_r\oplus\mathcal{H}$, $l\in[M_x]$, with $\overline{V}_{k+1}(c_l)=\overline{V}_{k+1}(c_j+\epsilon)$. By symmetry of $B_r$ and the hypercubes, if $c_l\notin \mathcal{H}_j\oplus B_r\oplus\mathcal{H}$, $l\in[M_x]$, then there exists no $\epsilon\in B_r\oplus\mathcal{H}$ with $c_j+\epsilon\in\mathcal{H}_l$, leading to \eqref{eq_proof_of_abstraction_bound_5}. Finally, \eqref{eq_proof_of_abstraction_bound_6} and \eqref{eq_proof_of_abstraction_bound_7} follow by the definition of $\Tilde{V}_k$ in \eqref{eq_robust_value_iteration} and $\overline{V}_k$ as its piecewise constant extension to $\mathcal{X}$. In summary, $V_k({x}_k)\geq \overline{V}_k(x_k)$ for all $x_k\in\mathcal{X}$.


The induction hypothesis is satisfied at $k=N$, where any ${x}_N\in {\mathcal{T}}$ yields $\overline{V}_N({x}_N) \leq V_N({x}_N)=1$ and any ${x}_N\in {\mathcal{T}^c}\subseteq \Tilde{\mathcal{T}}^c$ yields $\overline{V}_N({x}_N) = V_N({x}_N)=0$. It follows that $\overline{V}_k(\cdot)\geq V_k(\cdot)$ for all $k\in[N+1]$. Since ${c}_0,{x}_0\in\mathcal{H}_0$, $\Tilde{V}_0({c}_0)= \overline{V}_0({x}_0)\leq V_0({x}_0)$ yields the claim.\qed


\end{appendices}
\end{document}